\newtheorem{theorem}{Theorem}[section]
\newtheorem{lemma}[theorem]{Lemma}
\newtheorem{proposition}[theorem]{Proposition}
\newtheorem{corollary}[theorem]{Corollary} 
\newtheorem*{definition}{Definition}
\newtheorem{example}[theorem]{Example}
\newtheorem*{remark}{Remark}
\newtheorem*{notation}{Notation and terminology}
\newtheorem*{acknowledgement}{Acknowledgement}
\newtheorem*{thmA}{Theorem A}
\newtheorem*{thmB}{Theorem B}
\newtheorem*{thmC}{Theorem C}
\newtheorem*{thmA'}{Theorem A*}
\newtheorem*{propA}{Proposition A}
\numberwithin{equation}{section}
\begin{document}

\title[Cohomology of solvable groups]{The cohomology of virtually torsion-free solvable groups of finite rank}
\author[Kropholler]{Peter Kropholler}
\address{
Mathematical Sciences\\
University of Southampton\\
Highfield\\
Southampton SO17 1BJ\\
UK}
\email{P.H.Kropholler@southampton.ac.uk}

\author[Lorensen]{Karl Lorensen}
\address{Department of Mathematics and Statistics\\
Pennsylvania State University, Altoona College\\
Altoona, PA 16601\\
USA}
\email{kql3@psu.edu}

\subjclass[2010]{20F16, 20J05, 20J06}

\begin{abstract}
 Assume that $G$ is a virtually torsion-free solvable group of finite rank and $A$ a $\mathbb ZG$-module whose underlying abelian group is torsion-free and has finite rank. We stipulate a condition on $A$ that ensures that $H^n(G,A)$ and $H_n(G,A)$ are finite for all $n\geq 0$. Using this property for cohomology in dimension two, we deduce two results concerning the presence of near supplements and complements in solvable groups of finite rank.  As an application of our near-supplement theorem, we obtain a new result regarding the homological dimension of solvable groups.   

\end{abstract}

\maketitle

\section{Introduction}

The primary goal of this paper is to establish a property of the cohomology and homology of a virtually torsion-free solvable group of finite rank. We say that a solvable group has {\it finite rank} if, for every prime $p$, each of its abelian $p$-sections is a direct sum of finitely many cyclic and quasicyclic $p$-groups. In the literature, this property is commonly referred to as having {\it finite abelian section rank}. For our result, we will require the concept of the spectrum of a solvable group of finite rank, a notion that is usually only associated with the special case of a solvable minimax group. If $G$ is a solvable group of finite rank, then the {\it spectrum} of $G$, denoted ${\rm spec}(G)$, is the set of primes $p$ for which $G$ has a quasicyclic $p$-section. Given an arbitrary set of primes $\pi$, we describe a solvable group of finite rank as {\it $\pi$-spectral} if its spectrum is contained in $\pi$. Equipped with these terms, we can state our main theorem.

\begin{thmA} Let $G$ be a virtually torsion-free solvable group of finite rank with spectrum $\pi$. Assume that $A$ is a $\mathbb ZG$-module whose underlying abelian group is torsion-free and has finite rank. Suppose further that $A$ does not have any nontrivial $\mathbb ZG$-submodules that are $\pi$-spectral as abelian groups. Then $H^n(G,A)$ and $H_n(G,A)$ are finite
for $n\geq 0$. 

\end{thmA}

Section 2 is devoted to the proof of this theorem. We begin the section by establishing the following property of the functors ${\rm Ext}_{\mathbb ZG}^n$ and  ${\rm Tor}^{\mathbb ZG}_n$ for $G$ an abelian group. As well as playing a pivotal role in the proof of Theorem A, this proposition may be of  interest in its own right. 

\begin{propA} Let $G$ be an abelian group. Assume that $A$ and $B$ are $\mathbb ZG$-modules whose additive groups are torsion-free abelian groups of finite rank.  Suppose further that $A$ fails to contain a nontrivial submodule that is isomorphic to a submodule of $B$. Then there is a positive integer $m$ such that  $$m\cdot {\rm Ext}_{\mathbb ZG}^n(A,B)=0\ \ {\rm and}\ \ m\cdot {\rm Tor}^{\mathbb ZG}_n(A,B)=0$$ 
for all $n\geq 0$.  
\end{propA}

In Sections 3 and 4,  we apply the cohomological part of Theorem A in dimension two to shed light on the structure of solvable groups of finite rank. Our interest is in discerning the presence of certain types of near supplements and complements.  A {\it near supplement} to a normal subgroup $K$ of a group $G$ is a subgroup $X$ such that $[G:KX]$ is finite; if, in addition, $K\cap X=1$, then $X$ is referred to as a {\it near complement} to $K$.  The relevance of Theorem A to the detection of near supplements and complements is revealed by the following result of D. J. S. Robinson, applied in  [{\bf 7}, Theorem C] to find nilpotent near supplements. 

 \begin{proposition}{\rm (Robinson [{\bf 8}, Theorem 10.1.15])} Assume that $G$ is a group and $A$ a $\mathbb ZG$-module that has finite rank as an abelian group. Let $\xi$ be an element of $H^2(G,A)$ and $1\rightarrow A\rightarrow E \rightarrow G\rightarrow 1$ a group extension corresponding to $\xi$. Then $\xi$ has finite order if and only if $E$ contains a subgroup $X$ such that $X\cap A$ is finite and $[E:AX]$ is finite. 
\end{proposition}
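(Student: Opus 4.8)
The plan is to prove both implications directly from the cocycle description of the extension together with the functoriality of $H^{2}$; the delicate half is the forward implication, where the possible torsion of $A$ prevents one from naively inverting $n$.

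For the implication that a near complement forces $\xi$ to have finite order, I would proceed as follows. Suppose such an $X$ exists, put $F=A\cap X$, and let $\bar X$ be the image of $X$ in $G$; then $F$ is finite and $[G:\bar X]=[E:AX]$ is finite. First record three routine facts: $F$ is normal in $AX$ (it is normalised by $X$, since conjugation by $X$ stabilises both $X$ and $A$, and by $A$, since $A$ is abelian); $F$ is invariant under the conjugation action of $\bar X$ on $A$; and $X/F$ is a complement to $A/F$ in $AX/F$. Hence the extension $1\to A/F\to AX/F\to\bar X\to 1$ splits, so the image of $\operatorname{res}^{G}_{\bar X}(\xi)$ under $H^{2}(\bar X,A)\to H^{2}(\bar X,A/F)$ vanishes. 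The long exact cohomology sequence of $0\to F\to A\to A/F\to 0$ then exhibits $\operatorname{res}^{G}_{\bar X}(\xi)$ as coming from $H^{2}(\bar X,F)$, which is annihilated by $\exp(F)$ because $F$ is; so $\operatorname{res}^{G}_{\bar X}(\xi)$ has finite order. Applying corestriction and the identity $\operatorname{cor}\circ\operatorname{res}=[G:\bar X]$ then shows that $[G:\bar X]\,\xi$, hence $\xi$, has finite order. (This half does not use the rank hypothesis.)

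For the converse, assume $n\xi=0$ with $n\ge 1$ and fix a normalised $2$-cocycle $f$ representing $\xi$, so $E=A\times_{f}G$. Write $nf=\delta c$ with $c\colon G\to A$. This is where the rank hypothesis enters: $A/nA$ and $A[n]$ are abelian of finite rank and of exponent dividing $n$, hence finite. The reduction $\bar c\colon G\to A/nA$ of $c$ is a crossed homomorphism (because $\delta\bar c$ is the reduction of $nf$, which has values in $nA$), so $G_{1}:=\{g\in G:c(g)\in nA\}$ is a subgroup and $\bar c$ induces an injection $G/G_{1}\hookrightarrow A/nA$; thus $[G:G_{1}]<\infty$. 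Restricting all data to $G_{1}$ — which replaces $E$ by its finite-index subgroup $E_{1}:=A\times_{f}G_{1}$ — I may assume $c$ has values in $nA$. Choose a normalised set-theoretic section $\rho$ of the surjection $A\twoheadrightarrow nA$ given by multiplication by $n$ and set $d=\rho\circ c$, so $c=nd$ and hence $n(f-\delta d)=0$; that is, the cohomologous cocycle $f':=f-\delta d$ has values in the finite submodule $A[n]$. I would then let $X$ be the image of the subextension $A[n]\times_{f'}G_{1}$ under the isomorphism $A\times_{f'}G_{1}\xrightarrow{\ \sim\ }A\times_{f}G_{1}=E_{1}$ induced by $f'-f=-\delta d$. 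A short computation, using that $f,c,\rho,d$ are normalised, then yields $A\cap X=A[n]$, which is finite, and $AX=E_{1}$, so that $[E:AX]=[E:E_{1}]=[G:G_{1}]$ is finite, as required.

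I expect the real obstacle to be precisely the torsion of $A$ in the converse: one wants to divide $c$ by $n$, but multiplication by $n$ on $A$ fails to be injective once $A[n]\ne 0$. The remedy is the two-step manoeuvre above — first restrict to the finite-index subgroup $G_{1}$ on which $\bar c$ vanishes (possible because $A/nA$ is finite), then absorb the non-uniqueness of the division into the finite module $A[n]$, over which the replacement cocycle $f'$ lives. The remaining ingredients are standard: $H^{*}(\Gamma,M)$ is killed by any integer killing $M$; the behaviour of $H^{2}$ under a short exact sequence of coefficient modules and under pushout; and $\operatorname{cor}\circ\operatorname{res}=[G:\Gamma]$ for a finite-index subgroup $\Gamma$.
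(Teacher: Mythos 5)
Your argument is correct: the forward direction via $F=A\cap X$, pushout to $A/F$, the coefficient sequence, and $\operatorname{cor}\circ\operatorname{res}=[G:\bar X]$ is sound, and the converse correctly handles the torsion of $A$ by first passing to the finite-index subgroup $G_1$ where $c$ takes values in $nA$ (using that $A/nA$ and $A[n]$ are finite for a finite-rank abelian group) and then replacing $f$ by the cohomologous cocycle $f'$ valued in $A[n]$. The paper itself gives no proof, citing Robinson, and your argument is essentially the standard one found in those sources, so there is nothing to flag.
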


 Our focus is on near supplements and complements to normal subgroups that give rise to a quotient that is minimax. We remind the reader that a solvable group is {\it minimax} if it has a series of finite length in which each factor is either cyclic or quasicyclic. Moreover, if $\pi$ is a set of primes, then a {\it $\pi$-minimax group} is a solvable minimax group that is $\pi$-spectral.  Given a solvable group $G$ with finite rank and $K\lhd G$ such that $G/K$ is $\pi$-minimax for a set of primes $\pi$, we seek conditions that guarantee the existence of a $\pi$-minimax near supplement or complement to $K$. 

Section 3 treats near supplements; our principal discovery is 

\begin{thmB} Let $G$ be a solvable group with finite rank. Assume that $K$ is a normal subgroup of $G$ such that $G/K$ is $\pi$-minimax for some set $\pi$ of primes. If $G/K$ is virtually torsion-free, then $K$ has a $\pi$-minimax near supplement in $G$.
\end{thmB}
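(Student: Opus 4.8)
The plan is to produce $X$ by a cohomological argument: using Proposition 1.1, the problem reduces to showing that a suitable second cohomology class is torsion, and that torsion-ness is then extracted from the case $n=2$ of Theorem A.

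I would start with two normalizations. First, a $\pi$-minimax near supplement to $K$ inside a subgroup $G_{1}\ge K$ of finite index in $G$ is automatically a $\pi$-minimax near supplement to $K$ in $G$, and $G_{1}/K$ has finite index in $G/K$; so, taking $G_{1}$ to be the preimage in $G$ of a torsion-free subgroup of finite index in $G/K$, I may assume $G/K$ is torsion-free. Second, I would reduce to the case in which $K$ is abelian by induction on the derived length of $K$: if $L$ denotes the last nontrivial term of the derived series of $K$, then $L$ is abelian and, being characteristic in $K$, normal in $G$; the pair $(G/L,K/L)$ has strictly smaller derived length, and $(G/L)/(K/L)\cong G/K$ is still torsion-free and $\pi$-minimax, so the inductive hypothesis furnishes a $\pi$-minimax subgroup $X_{0}/L$ of $G/L$ with $[G:KX_{0}]$ finite; since $L\le K$ one has $[KX_{0}:KX]\le[X_{0}:LX]$ for every $X\le X_{0}$, so it is enough to find a $\pi$-minimax near supplement to $L$ in $X_{0}$. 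This leaves exactly the situation in which $K=A$ is abelian, a $\mathbb{Z}Q$-module with $Q=G/A$ a $\pi$-minimax group — with the caveat that this last reduction need not preserve torsion-freeness of the quotient, so the torsion of the acting group will have to be treated separately.

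Assume now that $K=A$ is abelian, and let $\xi\in H^{2}(Q,A)$ be the class of the extension $1\to A\to G\to Q\to 1$. The crucial point is that $\xi$ itself need not be shown to be torsion: it suffices to produce a $\mathbb{Z}G$-submodule $B$ of $A$ such that $B$ is $\pi$-minimax as an abelian group and the image $\bar\xi$ of $\xi$ in $H^{2}(Q,A/B)$ is torsion. Indeed $B$ is then normal in $G$, and $A/B$, being a minimax abelian group, has finite rank, so Proposition 1.1 applied to $1\to A/B\to G/B\to Q\to 1$ yields a subgroup $X/B$ of $G/B$ with $(X/B)\cap(A/B)$ finite and $[G/B:(A/B)(X/B)]$ finite; the subgroup $X$ of $G$ then satisfies $[G:AX]<\infty$, and $X$ is $\pi$-minimax, being an extension of $B$ (which is $\pi$-minimax) by $X/B$ (which is finite-by-$\pi$-minimax, hence $\pi$-minimax).

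To finish I must construct $B$ and bring Theorem A to bear. Theorem A requires the acting group to be virtually torsion-free, and $Q$ need not be; overcoming this — by a supplementary reduction disposing of the radicable part of the relevant acting group — is one delicate point. The submodule $B$ itself should absorb the torsion of $A$ at the primes of $\pi$ together with the ``$\pi$-part'' of $A/T(A)$, and be chosen so that $B$ is $\pi$-minimax while the quotient $A/B$ has torsion only at primes outside $\pi$ and possesses no nontrivial $\mathbb{Z}G$-module section that is torsion-free and $\pi$-minimax as an abelian group. Given such a $B$, Theorem A shows that the second cohomology of $Q$ with coefficients in the torsion-free quotient of $A/B$ is torsion, while the contribution of the remaining quasicyclic torsion of $A/B$ at primes outside $\pi$ is likewise torsion — this too being deduced from Theorem A, now applied to the modules $\mathbb{Z}[1/q]$ with $q\notin\pi$ — so that $\bar\xi$ is torsion, as required. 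I expect the construction of $B$ to be the principal obstacle: isolating a $\pi$-minimax submodule whose complementary quotient meets the hypotheses of Theorem A is exactly the delicate module-theoretic matter — a naive candidate, such as the sum of all $\pi$-minimax submodules of $A$, can fail even to be minimax — and this, together with the reduction to a virtually torsion-free acting group and the handling of divisible torsion at primes outside $\pi$, is where the argument requires the greatest care. It is precisely this module theory that Proposition A and the preparatory work of \S\,2 and \S\,3 are designed to supply.
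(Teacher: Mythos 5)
Your overall strategy---reduce to abelian $K$, read off the extension class in $H^2$, and combine Proposition 1.1 with Theorem A---is the right one in spirit, but the proposal leaves the decisive step unperformed and the one concrete fix you do sketch is incorrect. The construction of the submodule $B$ is not a deferrable detail: it is the entire content of the abelian case, and you explicitly concede you do not know how to carry it out. The paper shows that no such global $B$ is needed. It inducts on $h(K)$ through a $G$-invariant series whose factors are torsion-free abelian and \emph{rationally irreducible} as $\mathbb ZG$-modules, and for a rationally irreducible factor the situation splits cleanly: either the factor has no nontrivial $\pi$-minimax $G$-invariant subgroup, in which case Theorem A applies verbatim (rational irreducibility converts ``no $\pi$-minimax submodule'' into ``no torsion-free $\pi$-minimax section'') and Proposition 1.1 yields a complement; or it has one, say $M$, and then rational irreducibility forces $K/M$ to be torsion, dropping the problem into the torsion-coefficient setting (Lemma 4.2). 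Rational irreducibility is exactly what replaces your elusive $B$.

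Two further points would sink the argument as written. First, your claim that the quasicyclic torsion at primes $q\notin\pi$ can be handled ``by Theorem A applied to $\mathbb Z[1/q]$'' fails: if the action of $Q$ on a quasicyclic factor is trivial (a central factor, say), then $\mathbb Z[1/q]$ with the lifted (trivial) action contains $\mathbb Z$ as a submodule, which is a nontrivial torsion-free $\pi$-minimax section for \emph{every} $\pi$, so Theorem A's hypothesis is violated---and indeed $H^1(\mathbb Z,\mathbb Z[1/q])\cong\mathbb Z[1/q]$ is not torsion. What is genuinely needed here is a separate theorem of Robinson (Proposition 4.1 of the paper): $H^n(G,A)$ is torsion when $A$ is torsion minimax with ${\rm spec}(A)\cap{\rm spec}(G)=\emptyset$. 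This is the second pillar of the proof alongside Theorem A, and your proposal never identifies it. Second, your derived-length induction does not close: the subproblem of near-supplementing $L$ in $X_0$ has acting quotient $X_0/L$ that is $\pi$-minimax but possibly not virtually torsion-free, and Example 4.3 of the paper shows the theorem is \emph{false} in that generality; so ``treating the torsion of the acting group separately'' is not a loose end but a point where the induction must be organized so that virtual torsion-freeness is preserved at every stage---which the paper's argument arranges (in the complemented case the supplement embeds, modulo a finite subgroup, into the virtually torsion-free quotient $G/K$).
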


\noindent The two key ingredients for Theorem B are Theorem A and a result of D. J. S. Robinson \cite{robinson4} that describes a situation where the cohomology of a solvable minimax group with torsion coefficients must itself be torsion. With these cohomological tools, the proof of Theorem B becomes a rather swift affair, involving induction on the Hirsch length of $K$.

Theorem B is a generalization of [{\bf 4}, Proposition I.20], which states that, if $G$ is a solvable minimax group and $K\lhd G$ such that $G/K$ is polycyclic, then $K$ has a polycyclic near supplement. The latter proposition is employed by M. R. Bridson and the first author in order to prove a result [{\bf 4}, Theorem I.3] about the homological dimension of an abelian-by-polycyclic group. We conclude Section 3 by applying Theorem B to generalize Bridson and the first author's theorem  to a larger class of groups (Proposition 3.6). 

In the final section of the paper, we investigate which hypotheses need to be added to Theorem B in order to give rise to a near complement to $K$. It turns out that near complements are a comparatively rare phenomenon, their presence assured only if $K$ happens to enjoy two quite restrictive properties. The first condition is that $K$ should be Noetherian as a $G$-operator group, meaning that it satisfies the maximum condition on $G$-invariant subgroups. Second, $K$ must not have any torsion-free $\pi$-minimax quotients.  That these two hypotheses suffice to furnish a near complement forms the content of Theorem C below. The proof of this result hinges on recognizing that the second property is often passed on to $G$-invariant subgroups of Noetherian $G$-operator groups.  

Before proceeding to the body of the paper, we relate the terms and notation that we will be using. In addition, we state a proposition describing the basic structure of solvable groups of finite rank. 

\begin{notation}
{\rm Throughout the paper, $\pi$ will represent an arbitrary set of primes. 

A {\it section} of a group or module is a quotient of one of its subgroups or submodules, respectively. 

An abelian group $A$ is said to be {\it bounded} if there is a positive integer $m$ such that $mA=0$. 
 
A {\it \v{C}ernikov group} is a group that is a finite extension of a direct product of finitely many quasicyclic groups.

Let $G$ be a group. Then $\tau(G)$ represents the {\it torsion radical} of $G$, that is, the join of all the torsion normal subgroups of $G$.  The join of all the nilpotent normal subgroups of $G$, known as the {\it Fitting subgroup}, is denoted by ${\rm Fitt}(G)$. 

If $G$ is a solvable group such that $G/\tau(G)$ has finite rank, then we say that $G$ has {\it finite torsion-free rank}. 

If $G$ is a solvable group of finite torsion-free rank, then $h(G)$ denotes the {\it Hirsch length} of $G$, namely, the number of infinite cyclic factors in any series of finite length whose factors are all either infinite cyclic or torsion. 
If $G$ is also minimax, then the {\it minimax length} of $G$, written $m(G)$, is the number of infinite factors in any series of finite length in which each factor is either cyclic or quasicyclic. Like the Hirsch length, the minimax length is an invariant of the group; in other words, it does not depend on the particular series selected. 

Let $G$ be a group and $A$ a $\mathbb ZG$-module. If $A\otimes \mathbb Q$ is a simple $\mathbb QG$-module, then we will refer to $A$ as {\it rationally irreducible}. In other words, $A$ is rationally irreducible if and only if the additive group of $A$ is not torsion and, for every submodule $B$ of $A$, either $B$ or $A/B$ is torsion as an abelian group.
}
\end{notation}

The following properties will be invoked frequently; they can be found in [{\bf 8}, Section 5.2]. 

\begin{proposition} Assume that $G$ is a solvable group of finite rank. 
\vspace{2pt}

\noindent (i) $G$ is virtually torsion-free if and only if $\tau(G)$ is finite.
\vspace{2pt}

\noindent (ii) If $G$ is virtually torsion-free, then $G$ has a characteristic subgroup of finite index that possesses a characteristic series of finite length whose factors are torsion-free abelian groups.
\vspace{2pt}

\noindent (iii) If $\tau(G)$ is  a \v{C}ernikov group, then ${\rm Fitt(G)}$ is nilpotent and $G/{\rm Fitt}(G)$ virtually abelian. 
Moreover, if $\tau(G)$ is finite, then $G/{\rm Fitt}(G)$ is also finitely generated. 

\end{proposition}
\newpage

\section{Cohomology and homology}

The aim of this section is to prove Theorem A.  Our first step is to establish Proposition~A. For this purpose, we require the following property, which can be proved in the same fashion as Proposition 1.1.

\begin{proposition} Let $R$ be a ring. Suppose that $A$ and $B$ are $R$-modules such that the additive group of $B$ has finite rank.  Let $0\rightarrow B\rightarrow E\rightarrow A\rightarrow 0$  be an $R$-module extension and $\xi$ the element of ${\rm Ext}_R^1(A,B)$ that corresponds to this extension. Then $\xi$ has finite order if and only if $E$ has a submodule $X$ such that $B\cap X$ and $E/B+X$ are finite. 
\end{proposition}

We now proceed with the proof of Proposition~A; the argument we give for the case where both $A$ and $B$ are rationally irreducible is due to Derek Robinson.  

\begin{proof}[{\bf Proof of Proposition A}]

First we treat the case where $A$ and $B$ are both rationally irreducible. Set $R=\mathbb ZG$. Let $a$ and $b$ be nonzero elements of $A$ and $B$, respectively. Assume that  $I={\rm Ann}_R(a)$ and $J={\rm Ann}_R(b)$.  Then $R/I\cong Ra$ and $R/J\cong Rb$ as $R$-modules. We claim that $IA=0$ and $JB=0$. To establish the former assertion, let $x\in A$. Since $A/Ra$ is torsion {\it qua} abelian group, we have $mx=ra$ for some positive integer $m$ and $r\in R$. Thus $m(Ix)=r(Ia)=0$, so that $Ix=0$. Hence $IA=0$, and, by the same reasoning, $JB=0$.  

Next we show that $I\not \subset J$. Suppose that $I\subset J$.  Because $R/I$ is a rationally irreducible $R$-module, it follows that $I=J$. However, this implies that ${Ra\cong Rb}$ as $R$-modules, contradicting the hypothesis. Therefore, $I\not \subset J$. The rational irreducibility of $R/J$, then, forces the additive group of $R/I+J$ to be torsion. As a result, there is a positive integer $m$ such that $m\cdot 1_R\in I+J$. Now we observe that, since $R$ is commutative,  ${\rm Ext}_R^n(A,B)$ and ${\rm Tor}^R_n(A,B)$ can be endowed with $R$-module structures. Moreover, as $R$-modules, both
${\rm Ext}_R^n(A,B)$ and ${\rm Tor}^R_n(A,B)$ are annihilated by $I+J$. Consequently, $m{\rm Ext}_R^n(A,B)=0$ and $m{\rm Tor}^R_n(A,B)=0$. 

The second case that we consider is the one where merely $B$ is rationally irreducible. We proceed here by induction on $h(A)$, the case $h(A)=0$ being trivial. Assume that $h(A)\geq 1$, and let $U$ be a rationally irreducible submodule of $A$ such that the additive group of $A/U$ is torsion-free. The conclusion of the proposition will follow from the case proved above and the inductive hypothesis if we succeed in showing that $A/U$ does not contain a nontrivial submodule that is isomorphic to a submodule of $B$. To accomplish this, suppose that $A/U$ possesses such a submodule and take $V$ to be a submodule of $A$ containing $U$ such that $V/U\cong W$, where $W$ is a nontrivial submodule of $B$. By the case of the proposition established above, ${\rm Ext}^1_R(V/U,U)$ is bounded. According to Proposition~2.1, this means that $V$ has a submodule that is isomorphic to a submodule of $W$ with finite index. But the presence of such a submodule contradicts our hypothesis. Therefore, we are compelled to conclude that $A/U$ fails to possess a nontrivial submodule that is isomorphic to a submodule of $B$. This completes the proof for the case where $B$ is rationally irreducible.

Finally, we handle the general case of Proposition A  by inducting on $h(B)$. Assume that $h(B)\geq 1$, and let $U$ be a rationally irreducible submodule of $B$ such that $B/U$ is torsion-free as an abelian group. In order to deduce the desired property from the case above and the inductive hypothesis, we only need to establish that $B/U$ fails to contain a nontrivial submodule that is isomorphic to a submodule of~$A$. This can be accomplished by adducing an argument virtually identical to the one in the previous paragraph. 

\end{proof}

Our proof of Theorem A will employ Proposition A in conjunction with the following homological isomorphisms. Although these are doubtless well known, we provide proofs for the sake of completeness.  

\begin{lemma} Let $G$ be a group and $R$ a commutative ring. Suppose that $A$ and $B $ are $RG$-modules and view both ${\rm Hom}_R(A,B)$ and $A\otimes_R B$ as $\mathbb ZG$-modules under the diagonal actions. 

\noindent (i) If either $A$ is projective or $B$ is injective as an $R$-module, then, for $n\geq 0$,

$${\rm Ext}^n_{RG}(A,B)\cong H^n(G,{\rm Hom}_R(A,B)).$$

\noindent (ii) If $B$ is flat as an $R$-module, then, for $n\geq 0$,

$${\rm Tor}_n^{RG}(A,B)\cong H_n(G, A\otimes_R B).$$

\end{lemma}

The isomorphisms in Lemma 2.2 can be gleaned from the spectral sequences  below. 

\begin{proposition} Let $G$ be a group and $R$ a commutative ring. Suppose that $A$ and $B $ are $RG$-modules and regard ${\rm Ext}^n_R(A,B)$ and ${\rm Tor}_n^R(A,B)$ as $\mathbb ZG$-modules via the diagonal action for $n\geq 0$. 
Then the following two statements hold. 
\vspace{5pt}

\noindent (i) There is a cohomology spectral sequence whose $E_2$-page is given by 
$$E_2^{pq}=H^p(G,{\rm Ext}^q_R(A,B)),$$
\noindent and that converges to ${\rm Ext}^n_{RG}(A,B)$.
\vspace{5pt}

\noindent (ii) There is a homology spectral sequence whose $E^2$-page is given by 
$$E^2_{pq}=H_p(G,{\rm Tor}_q^R(A,B)),$$
\noindent and that converges to ${\rm Tor}_n^{RG}(A,B)$.
\end{proposition}

\begin{proof} We will confine our attention to (i), as (ii) can be proved by a dual argument. The former statement will follow immediately from the Grothendieck spectral sequence in [{\bf 5}, Proposition VIII.9.3] provided that we verify that ${H^n(G, {\rm Hom}_R(A,I))=0}$ for any injective $RG$-module $I$ and $n\geq 1$. To accomplish this, take a projective $RG$-module resolution $\cdots \rightarrow P_1\rightarrow P_0\rightarrow A\rightarrow 0$ of $A$. Since $I$ is injective as an $RG$-module, it is also injective as an $R$-module. Thus the sequence

\begin{equation} 0\rightarrow {\rm Hom}_R(A,I)\rightarrow {\rm Hom}_R(P_0,I)\rightarrow {\rm Hom}_R(P_1,I)\rightarrow \cdots 
\end{equation}
is exact. We claim that $H^n(G, {\rm Hom}_R(P_i,I))=0$ for $i\geq 0$ and $n\geq 1$.
To show this, it suffices to establish that $H^n(G, {\rm Hom}_R(RG,I))=0$ for $n\geq 1$.
However, this is easily seen to be true because ${\rm Hom}_R(RG,I)$ is isomorphic to the coinduced module ${\rm Hom}(\mathbb ZG,I)$, which has trivial cohomology in every positive dimension. Therefore, (2.1) is an acyclic resolution of ${\rm Hom}_R(A,I)$ with respect to the functor $H^0(G,\ \_\ )$. As a consequence, the groups $H^n(G,{\rm Hom}_R(A,I))$ are the cohomology groups of the cochain complex
  $$0\rightarrow {\rm Hom}_{RG}(P_0,I)\rightarrow {\rm Hom}_{RG}(P_1,I)\rightarrow \cdots .$$
 But this complex is acyclic in view of the injectivity of $I$ as an $RG$-module. Hence  $H^n(G,{\rm Hom}_R(A,I))=0$ for $n\geq 1$.
  \end{proof}

In order to deduce Theorem A from Proposition A and Lemma 2.2, we will make use of the following class of modules. 

\begin{definition} {\rm Assume that $G$ is a group. Let $\mathfrak{C}(G, \pi)$ be the smallest class of $\mathbb ZG$-modules with the following two properties.
\vspace{5pt}

\noindent (i) The class $\mathfrak{C}(G, \pi)$ contains every $\mathbb ZG$-module whose underlying abelian group is torsion-free of finite rank and $\pi$-spectral. 
\vspace{5pt}

\noindent (ii) The class $\mathfrak{C}(G, \pi)$ is closed under forming $\mathbb ZG$-module quotients as well as extensions.}
\end{definition}

As an immediate consequence of the definition, we have that  $\mathfrak{C}(G, \pi)$ is also submodule-closed and therefore section-closed. This can be proved very easily by inducting on the number of closure operations from (ii) required to construct a module in $\mathfrak{C}(G, \pi)$; the details are left to the reader.

\begin{lemma} For any group $G$, the class $\mathfrak{C}(G, \pi)$ is closed under forming $\mathbb ZG$-module sections.
\end{lemma}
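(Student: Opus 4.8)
The plan is to induct on the number of closure operations (quotients and extensions) needed to build a given module in $\mathfrak{C}(G,\pi)$, mirroring the remark that precedes the lemma about subgroup-closure, but now tracking sections rather than just subgroups. Concretely, for $A\in\mathfrak{C}(G,\pi)$ define its \emph{complexity} to be the least number of applications of the operations in (ii), starting from modules as in (i), required to produce $A$; I argue by induction on this complexity that every $\mathbb ZG$-module section of $A$ again lies in $\mathfrak{C}(G,\pi)$.

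For the base case, $A$ itself has additive group that is $\pi$-minimax and torsion-free; a section of $A$ is a quotient of a submodule of $A$, hence its additive group is a section of a $\pi$-minimax torsion-free abelian group. The class of $\pi$-minimax abelian groups is section-closed, but a section of a torsion-free group need not be torsion-free, so I cannot directly invoke (i). Instead I observe that any $\pi$-minimax abelian group $M$ sits in an extension $0\to T\to M\to M/T\to 0$ where $T=\tau(M)$ is a finite direct sum of finite and quasicyclic $\pi$-groups and $M/T$ is torsion-free $\pi$-minimax; since each quasicyclic $p$-group with $p\in\pi$ and each finite $\pi$-group is a quotient of $\mathbb Z[1/p]$ (respectively of $\mathbb Z$), which is $\pi$-minimax and torsion-free, such groups lie in $\mathfrak{C}(G,\pi)$ by property (i) followed by one quotient; hence $T$, being a finite direct sum — equivalently an iterated extension — of such modules, lies in $\mathfrak{C}(G,\pi)$, and then $M$ does too by one more extension. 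This shows $\mathfrak{C}(G,\pi)$ actually contains \emph{every} $\mathbb ZG$-module with $\pi$-minimax additive group, and in particular every section in the base case.

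For the inductive step, write $A$ as either a quotient $A=B/C$ with $B\in\mathfrak{C}(G,\pi)$ of smaller complexity, or an extension $0\to B\to A\to B'\to 0$ with $B,B'\in\mathfrak{C}(G,\pi)$ of smaller complexity. In the quotient case, a section of $A=B/C$ is also a section of $B$, so we are done by the inductive hypothesis. In the extension case, let $S=U/V$ be a section of $A$, with $V\le U\le A$. Put $U_0=U\cap B$ and $V_0=V\cap B$; then $U_0/V_0$ is a submodule of $S$ and it is a section of $B$, while $S/(U_0/V_0)\cong U/(V+U_0)$ is a quotient of $U/(U\cap B)$, which embeds in $A/B\cong B'$, so this top factor is a section of $B'$. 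By the inductive hypothesis both $U_0/V_0$ and $S/(U_0/V_0)$ lie in $\mathfrak{C}(G,\pi)$, whence $S\in\mathfrak{C}(G,\pi)$ by closure under extensions. This completes the induction and the proof.

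I expect the only genuine subtlety to be the base case: the naive claim "a section of a torsion-free $\pi$-minimax module is torsion-free $\pi$-minimax" is false, so one really must absorb torsion subgroups back into the class using property (ii), which is why the argument yields the slightly stronger fact that $\mathfrak{C}(G,\pi)$ contains all modules with $\pi$-minimax additive group. Everything else is a routine diagram chase with the second isomorphism theorem, and the bookkeeping of complexities is exactly parallel to the already-cited (and omitted) proof of subgroup-closure.
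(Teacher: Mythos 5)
Your inductive step is fine, and your overall strategy (induction on the number of closure operations, tracking sections directly) is a legitimate variant of the paper's route, which proves subgroup-closure by the same induction and then composes with quotient-closure, i.e.\ axiom (ii). The problem is your base case. Having correctly noted that a section of a torsion-free module need not be torsion-free, you conclude that you ``cannot directly invoke (i)'' and then set out to prove the much stronger assertion that $\mathfrak{C}(G,\pi)$ contains \emph{every} $\mathbb ZG$-module with $\pi$-minimax additive group. The key step there --- that a quasicyclic $p$-group is a quotient of $\mathbb Z[1/p]$ --- is only a statement about abelian groups; to use it you would need the given $G$-action on the quasicyclic module to lift to a $G$-action on some torsion-free $\pi$-minimax module, and this can fail. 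For instance, let $G=\langle t\rangle$ be infinite cyclic acting on $\mathbb Z/p^\infty$ ($p\in\pi$) by multiplication by a unit $u\in\hat{\mathbb Z}_p^\times$ that is transcendental over $\mathbb Q$. If $N$ were a torsion-free $\mathbb ZG$-module of finite rank mapping onto this module, the characteristic polynomial $f$ of $t$ acting on $N^{\mathbb Q}$ would annihilate $N$ and hence the quotient, forcing $f(u)=0$ in $\hat{\mathbb Z}_p$ and contradicting transcendence. So your stronger claim is false, and the base case as written does not stand.

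Fortunately the gap closes in one line, using exactly the move you dismissed: in the base case you do not need the \emph{section} to satisfy (i), only to lie in the class. If $A$ satisfies (i) and $V\leq U\leq A$ are submodules, then $U$, being a submodule of a module whose additive group is torsion-free and $\pi$-minimax, itself satisfies (i); then $U/V$ is a $\mathbb ZG$-module quotient of $U$ and so lies in $\mathfrak{C}(G,\pi)$ by (ii). (The pathological modules above never arise as sections of modules satisfying (i), precisely because the $G$-action on such a section is constrained by the characteristic polynomial of the action on $A^{\mathbb Q}$.) With the base case repaired this way, your inductive step goes through verbatim and the proof is complete.
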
 

Below we establish another closure property of $\mathfrak{C}(G, \pi)$. 

\begin{lemma} Assume that $G$ is a group. Suppose that $B$ is a $\mathbb ZG$-module whose additive group is torsion-free of finite rank and $\pi$-spectral.
If $A$ is a $\mathbb ZG$-module in  $\mathfrak{C}(G, \pi)$, then $A\otimes B$ lies in $\mathfrak{C}(G, \pi)$, where $A\otimes B$ is viewed as a $\mathbb ZG$-module under the diagonal action. 
\end{lemma}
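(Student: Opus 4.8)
The plan is to induct on the number of closure operations from property (ii) of Definition 3.6 that are needed to build $A$ as a member of $\mathfrak{C}(G,\pi)$. In other words, I would argue that the subclass
$$\mathfrak{D}=\{A\in\mathfrak{C}(G,\pi)\ :\ A\otimes B\in\mathfrak{C}(G,\pi)\}$$
already contains every $\mathbb{Z}G$-module whose additive group is torsion-free and $\pi$-minimax, and is closed under $\mathbb{Z}G$-module quotients and extensions; by minimality of $\mathfrak{C}(G,\pi)$ this forces $\mathfrak{D}=\mathfrak{C}(G,\pi)$, which is exactly the assertion.

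For the base case, suppose the additive group of $A$ is torsion-free and $\pi$-minimax. Then, since $B$ is also torsion-free and $\pi$-minimax as an abelian group, the tensor product $A\otimes B$ is torsion-free and $\pi$-minimax as an abelian group as well — here I would invoke the standard facts that the class of torsion-free $\pi$-minimax abelian groups is closed under tensor products (this is used already in the proof of Proposition 3.3) and that torsion-freeness is preserved because $A\otimes B$ embeds in $(A\otimes B)\otimes\mathbb{Q}=A^{\mathbb{Q}}\otimes_{\mathbb{Q}}B^{\mathbb{Q}}$. Hence $A\otimes B$ satisfies property (i) of Definition 3.6 and so lies in $\mathfrak{C}(G,\pi)$; thus $A\in\mathfrak{D}$. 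For the closure under quotients: if $A\in\mathfrak{D}$ and $\bar{A}$ is a $\mathbb{Z}G$-module quotient of $A$, then $\bar{A}\otimes B$ is a $\mathbb{Z}G$-module quotient of $A\otimes B$ (right-exactness of $\otimes B$, with the diagonal $G$-action carried along), and $A\otimes B\in\mathfrak{C}(G,\pi)$ by hypothesis, so $\bar{A}\otimes B\in\mathfrak{C}(G,\pi)$ by property (ii); hence $\bar{A}\in\mathfrak{D}$. For closure under extensions: given a short exact sequence $0\to A_1\to A_2\to A_3\to 0$ of $\mathbb{Z}G$-modules with $A_1,A_3\in\mathfrak{D}$, tensoring with $B$ over $\mathbb{Z}$ yields $0\to A_1\otimes B\to A_2\otimes B\to A_3\otimes B\to 0$, which is exact because $B$ is $\mathbb{Z}$-torsion-free hence $\mathbb{Z}$-flat; all maps are $\mathbb{Z}G$-linear for the diagonal actions. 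Since $A_1\otimes B$ and $A_3\otimes B$ lie in $\mathfrak{C}(G,\pi)$, so does $A_2\otimes B$ by property (ii), giving $A_2\in\mathfrak{D}$.

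The only genuinely delicate point is the exactness of $0\to A_1\otimes B\to A_2\otimes B\to A_3\otimes B\to 0$ — i.e., that $-\otimes_{\mathbb{Z}}B$ preserves injections — which is where the hypothesis that $B$ is torsion-free (equivalently, flat over $\mathbb{Z}$, since a torsion-free abelian group is a directed union of finitely generated free abelian groups) is essential; without it the argument for closure under extensions breaks down. Everything else is bookkeeping about the diagonal action being compatible with the maps in sight. I expect the write-up to be short: state the subclass $\mathfrak{D}$, verify (i) and (ii) for it, and conclude by minimality.
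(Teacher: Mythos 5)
Your proposal is correct and follows essentially the same route as the paper: the paper makes the same three observations (the base case that $M\otimes B$ is torsion-free and $\pi$-minimax, right-exactness for quotients, and exactness of tensoring with the flat module $B$ for extensions) and concludes by induction on the number of closure operations, which is exactly the argument you phrase via the subclass $\mathfrak{D}$ and minimality. Your explicit remark that torsion-freeness of $B$ is what makes the extension step work is the one genuinely delicate point, and you have identified it correctly.
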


\begin{proof} First we make the following three observations concerning $B$. 

\noindent (i) If $M$ is a $\mathbb ZG$-module whose additive group is torsion-free of finite rank and $\pi$-spectral, then $M\otimes B$ is torsion-free of finite rank and $\pi$-spectral. 

\noindent (ii) If  $M$ is a $\mathbb ZG$-module and $\bar{M}$ is a $\mathbb ZG$-module quotient of $M$, then $\bar{M}\otimes B$ is a $\mathbb ZG$-module quotient of $M\otimes B$. 

\noindent (iii) If $M$, $M'$, and $M''$ are $\mathbb ZG$-modules such that $M$ is an extension of $M'$ by $M''$, then $M\otimes B$ is a $\mathbb ZG$-module extension of $M'\otimes B$ by $M''\otimes B$. 

\noindent From these three properties it follows that $A\otimes B$ belongs to $\mathfrak{C}(G, \pi)$  by induction on the number of closure operations required to construct $A$ from $\mathbb ZG$-modules that are torsion-free of finite rank and $\pi$-spectral. Statement (i) establishes the base case, and (ii) and (iii) permit the execution of the inductive step. 
\end{proof}

Proposition A gives rise to the following properties of ${\rm Ext}^n_{\mathbb ZG}$ and ${\rm Tor}_n^{\mathbb ZG}$ applied to modules in $\mathfrak{C}(G, \pi)$ when $G$ is abelian. 

\begin{lemma} Let $G$ be an abelian group. Assume that $B$ is a $\mathbb ZG$-module whose additive group is torsion-free with finite rank. Suppose further that there are no nontrivial $\mathbb ZG$-submodules of $B$ that are $\pi$-spectral as abelian groups. If $A$ is a $\mathbb ZG$-module in $\mathfrak{C}(G, \pi)$ and $\bar{B}$ a $\mathbb ZG$-module quotient of $B$, then ${\rm Ext}_{\mathbb ZG}^n(A,\bar{B})$ and ${\rm Tor}^{\mathbb ZG}_n(A,\bar{B})$ are bounded abelian groups for $n\geq 0$. 
\end{lemma}

\begin{proof}  We will just prove the result for  ${\rm Ext}^n_{\mathbb ZG}$; the same reasoning will then apply to ${\rm Tor}_n^{\mathbb ZG}$. For each nonnegative integer $k$, let $\mathfrak{C}_k(G,\pi)$ be the class of all $\mathbb ZG$-modules that can be constructed from $\mathbb ZG$-modules that are torsion-free of finite rank and $\pi$-spectral as abelian groups by a sequence of at most $k$ homomorphic images and extensions. Taking $\bar{B}$ to be a $\mathbb ZG$-module quotient of $B$, we will prove by induction on $k$ that, for every module $A$ belonging to $\mathfrak{C}_k(G,\pi)$, ${\rm Ext}_{\mathbb ZG}^n(A,\bar{B})$ is a bounded abelian group for $n\geq 0$. The case $k=0$ follows from Proposition A and the long exact ${\rm Ext}_{\mathbb ZG}$-sequence. Suppose that $k>0$, and let $A$ be a module in the class $\mathfrak{C}_k(G,\pi)$.
We distinguish two possibilities: (1) $A$ is a $\mathbb ZG$-module homomorphic image of a module $M$ in $\mathfrak{C}_{k-1}(G,\pi)$; 
(2) $A$ is a $\mathbb ZG$-module extension of $A'$ by $A''$ such that $A'$ and $A''$ are both in $\mathfrak{C}_{k-1}(G,\pi)$. First we treat case (2). Under this assumption,  ${\rm Ext}_{\mathbb ZG}^n(A',\bar{B})$ and ${\rm Ext}_{\mathbb ZG}^n(A'',\bar{B})$ are bounded for $n\geq 0$, in view of the inductive hypothesis. Therefore,  ${\rm Ext}_{\mathbb ZG}^n(A,\bar{B})$ is bounded for $n\geq 0$. Assume now that case (1) holds. Let $M'$ be a submodule of $M$ such that $M/M'\cong A$. It is straightforward to see that every submodule of a module in $\mathfrak{C}_{k-1}(G,\pi)$ is also in $\mathfrak{C}_{k-1}(G,\pi)$.  Thus both ${\rm Ext}_{\mathbb ZG}^n(M,\bar{B})$ and ${\rm Ext}_{\mathbb ZG}^n(M',\bar{B})$ are bounded for $n\geq 0$. Once more availing ourselves of the long exact ${\rm Ext}_{\mathbb ZG}$-sequence,  we conclude that ${\rm Ext}_{\mathbb ZG}^n(A,\bar{B})$ is bounded for $n\geq 0$.
\end{proof}

Our purpose in defining the class  $\mathfrak{C}(G, \pi)$ and enunciating Lemma 2.6 is to apply the lemma to the integral homology of a nilpotent normal subgroup of a torsion-free solvable group with finite rank  and spectrum $\pi$ in place of the module $A$. To this end, we require the following property.

\begin{lemma} Assume that $G$ is a group, $N\lhd G$, and $Q=G/N$. In addition, suppose that $N$ is nilpotent of finite rank, $\pi$-spectral, and torsion-free. Then, for each $n\geq 0$, the $\mathbb ZQ$-module $H_nN$ lies in the class $\mathfrak{C}(Q, \pi)$.
\end{lemma}

\begin{proof} We proceed by induction on the nilpotency class of $N$. First suppose that $N$ is abelian. Then $H_nN$ is torsion-free for $n\geq 0$. Also, as an exterior power of $N$, $H_nN$ is $\pi$-spectral.  Hence $H_nN$ belongs to $\mathfrak{C}(Q, \pi)$.
Now assume that the nilpotency class of $N$ exceeds one.  Set $Z=Z(N)$ and consider the Lyndon-Hochschild-Serre (LHS) homology spectral sequence associated to the extension $1\rightarrow Z\rightarrow N\rightarrow N/Z\rightarrow 1$. In this spectral sequence, $E_{pq}^2=H_p(N/Z, H_qZ)=H_p(N/Z)\otimes H_qZ$ for $p,\ q \geq 0$. By the inductive hypothesis, $H_p(N/Z)$ lies in $\mathfrak{C}(Q, \pi)$. Therefore, by Lemma 2.5, $E_{pq}^2$ belongs to $\mathfrak{C}(Q, \pi)$. Since the differentials in the spectral sequence are $\mathbb ZQ$-module homomorphisms, $E_{pq}^{\infty}$ can be regarded as a $\mathbb ZQ$-module section of $E_{pq}^2$. Thus $E_{pq}^{\infty}$ belongs to $\mathfrak{C}(Q, \pi)$. Moreover, $H_nN$ has a series of submodules whose factors are isomorphic to the modules  $E_{pq}^{\infty}$ for $p+q=n$. The conclusion follows, then, from the closure of $\mathfrak{C}(Q, \pi)$ with respect to extensions.
\end{proof}

We now have everything in place to prove Theorem A. 
 
\begin{proof}[{\bf Proof of Theorem A}]

First of all, we observe that it will follow that these cohomology and homology groups are finite if we can establish that they are bounded. To see this, suppose that there is a positive integer $m$ such that $mH^n(G,A)=0$ and $mH_n(G,A)=0$ for some $n\geq 0$. Then the exact sequence $0\rightarrow  A\stackrel{\mu_m}{\rightarrow} A\rightarrow A/mA\rightarrow~0$, where $\mu_m:A\to A$ is induced by multiplication by $m$, gives rise to exact sequences $0\rightarrow H^n(G,A)\rightarrow H^n(G,A/mA)$ and $0\rightarrow 
H_n(G,A)\rightarrow H_n(G,A/mA)$. Since $G$ has finite rank and $A/mA$ is finite, $H^n(G,A/mA)$ and $H_n(G,A/mA)$ are finite. Hence the same holds for $H^n(G,A)$ and $H_n(G,A)$. 

We begin by proving the result under the assumption that $A$ is a rationally irreducible $\mathbb ZG$-module. There is a well-known theorem of A.~Mal'cev [{\bf 8}, 3.1.6] that states that every solvable irreducible linear group is virtually abelian. Hence $G/C_G(A)$ is virtually abelian. Let $F={\rm Fitt}(G)$ and $N=F \cap  C_G(A)$.  According to Proposition 1.2(iii), $F$ is nilpotent and $G/F$ virtually abelian.  Therefore, $G/N$, too, is virtually abelian. As a result, we can find a torsion-free subgroup $G_0$ of $G$ such that $[G:G_0]$ is finite and $Q_0=G_0/N_0$ is abelian, where $N_0=N\cap G_0$. Our plan is to show that $H^n(G_0,A)$ and $H_n(G_0,A)$ are bounded for each $n\geq 0$. An application of the transfer maps in cohomology and homology will then yield that 
$H^n(G,A)$ and $H_n(G,A)$ are bounded and thus finite.

First we deal with cohomology. Setting $\tilde{A }=(A\otimes \mathbb Q)/A$, we will employ the LHS cohomology spectral sequence for the group extension $1\to N_0\to G_0\to Q_0\to 1$ to study $H^n(G_0,\tilde{A})$. The universal coefficient theorem and Lemma 2.2 give rise to the following chain of isomorphisms for $p, \ q\geq 0$.
\begin{equation} H^p(Q_0, H^q(N_0,\tilde{A})) \cong H^p(Q_0 , {\rm Hom}(H_qN_0,\tilde{A}))\cong {\rm Ext}^p_{\mathbb ZQ_0}(H_qN_0,\tilde{A}).\end{equation}
Since $[G:G_0]$ is finite, any $\mathbb ZG_0$-submodule of $A$ that is $\pi$-spectral as an abelian group generates a $\mathbb ZG$-submodule with the same property. As a result, $A$ must not contain any nontrivial $\mathbb ZQ_0$-submodules that are $\pi$-spectral as abelian groups.
Consequently, the same is true for $A\otimes \mathbb Q$. 
Thus, by Lemmas 2.7 and 2.6, ${\rm Ext}^p_{\mathbb ZQ_0}(H_qN_0,\tilde{A})$ is bounded for $p, \ q\geq 0$. It follows, then, that $H^n(G_0,\tilde{A})$ is bounded for $n\geq 0$. Moreover, an identical argument establishes that $H^n(G_0,A\otimes \mathbb Q)$ is bounded for $n\geq 0$. Applying the long exact cohomology sequence, we infer that $H^n(G_0,A)$ is bounded for $n\geq 0$. 

We investigate the homology groups $H_n(G_0,A)$ directly, relying as above on an LHS spectral sequence. Dual to (2.2) are the isomorphisms 
\begin{equation*} H_p(Q_0, H_q(N_0,A)) \cong H_p(Q_0 , H_qN_0\otimes A)\cong {\rm Tor}_p^{\mathbb ZQ_0}(H_qN_0,A)\end{equation*}
for $p, \ q\geq 0$. Appealing again to Lemmas 2.7 and 2.6, we deduce that ${\rm Tor}_p^{\mathbb ZQ_0}(H_qN_0,A)$ is bounded for $p, \ q\geq 0$. Therefore, $H_n(G_0,A)$ is bounded for $n\geq 0$. This completes the proof for the case where $A$ is rationally irreducible.

Finally, we prove the general case of the theorem by induction on $h(A)$, the case $h(A)=0$ being trivial. Assume that $h(A)\geq 1$, and  let $B$ be a rationally irreducible submodule of $A$ such that the additive group of $A/B$ is torsion-free. The conclusion of the theorem will follow from the rationally irreducible case together with the inductive hypothesis if we manage to show that $A/B$ does not contain a nontrivial submodule that is $\pi$-spectral as an abelian group. To accomplish this, suppose that $A/B$ possesses such a submodule and take $C$ to be a submodule of $A$ properly containing $B$ such that $C/B$ is $\pi$-spectral. Now set $\Gamma=C\rtimes G$. By the case of the theorem proved above, $H^2(\Gamma/B,B)$ is finite. Applying Proposition~1.1, we obtain a $\pi$-spectral subgroup $X$ of $\Gamma$ such that  $[\Gamma:BX]<\infty$. Notice that $X\cap C$ is a nontrivial normal subgroup of $BX$. Thus the $\mathbb ZG$-submodule of $C$ generated by $X\cap C$ is $\pi$-spectral as an abelian group, which contradicts our hypothesis concerning $A$. We conclude, then, that $A/B$ does not possess any nontrivial submodules that are $\pi$-spectral as abelian groups. Hence Theorem~A has now been proven.

\end{proof}

\section{Near supplements}

In this section, we prove the principal group-theoretic result of the paper, Theorem B, and explore some of its consequences. In addition to Theorem A, the proof of Theorem B appeals to the following result of Robinson.

\begin{proposition}{\rm (Robinson [{\bf 11}, Theorem 2.1])} Let $G$ be a $\pi$-minimax group and $A$ a $\mathbb ZG$-module that is torsion and has finite rank qua abelian group. If the additive group of $A$ fails to have any quasicyclic $p$-subgroups for every $p\in \pi$, then $H^n(G,A)$ is torsion for $n\geq 0$.
\end{proposition}

Robinson's proposition immediately yields  

\begin{lemma} Let $G$ be a solvable group of finite rank. Suppose that  $T$ is a torsion normal subgroup of $G$ such that $G/T$ is $\pi$-minimax. Then $T$ has a $\pi$-minimax near supplement. 
\end{lemma}

\begin{proof} The proof is by induction on the length of the derived series of $T$. First suppose that $T$ is abelian. Let $\bar{\pi}={\rm spec}(G/T)$ and $S$ be the $\bar{\pi}$-torsion part of $T$. By Proposition 3.1, $H^2(G/T,T/S)$ is torsion. Invoking Proposition 1.1, we conclude that $G$ contains a  subgroup $X$ such that $S<X$, $X\cap T/S$ is finite, and $[G:TX]$ is finite. Since $X$ is clearly $\pi$-minimax, this establishes the base case. 

Assume that $T'\neq 1$. By the abelian case, $G$ has a subgroup $Y$ such that $[G:TY]$ is finite and $Y/T'$ is $\pi$-minimax. Applying the inductive hypothesis to $T'$ inside $Y$, we obtain a $\pi$-minimax subgroup $X<Y$ with $[Y:T'X]$ finite. It follows, then, that $[G:TX]$ is finite, thus completing the proof. 
\end{proof}

Having stated Lemma 3.2, we are ready to proceed with the proof of Theorem~B.

\begin{proof} [{\bf Proof of Theorem B}] To begin with, we treat the case where $K$ is torsion-free abelian and rationally irreducible as a $\mathbb ZG$-module. Within this case, we distinguish two subcases: (1) $K$ has no nontrivial $\pi$-minimax $G$-invariant subgroups; (2) $K$ has a nontrivial $\pi$-minimax $G$-invariant subgroup. 
First we dispose of case (1). In this situation, we can apply Theorem A to deduce that $H^2(G/K,K)$ is finite. According to Proposition 1.1, this means that there is a subgroup $X$ of $G$ such that $X\cap K=1$ and $[G:KX]$ is finite. Then $X$ is $\pi$-minimax, yielding the conclusion. Next we consider case (2).  Let $M$ be a nontrivial $G$-invariant $\pi$-minimax subgroup  of $K$. Because $K/M$ is torsion, Lemma 3.2 furnishes a subgroup $X$ containing $M$ such that $X/M$ is $\pi$-minimax and $[G:KX]<\infty$. As $X$ is plainly $\pi$-minimax, this concludes the argument for case (2).

Now we tackle the case where $K$ is virtually torsion-free. We proceed by induction on $h(K)$, the case $h(K)=0$ being trivial. Suppose that $h(K)\geq 1$. By Proposition 1.2(ii), $K$ has a characteristic subgroup $K_0$ of finite index that possesses a characteristic series of finite length in which each factor is a torsion-free abelian group. Hence we can find a $G$-invariant subgroup $L$ of $K_0$ such that $K_0/L$ is a torsion-free abelian group that is rationally irreducible as a $\mathbb ZG$-module. Consequently, by the case established in the first paragraph, there is a subgroup $Y$ such that $L<Y$, $Y/L$ is $\pi$-minimax, and $[G:K_0Y]<\infty$. Since $Y/L$ is virtually torsion-free and $h(L)<h(K)$, we can apply the inductive hypothesis to $L$ inside $Y$. This yields a $\pi$-minimax subgroup $X<Y$ such that $[Y:LX]$ is finite. It follows, then, that $[G:KX]$ is finite. Thus $X$ can serve as the desired subgroup.

Finally, we deal with the general case. Setting $T=\tau(K)$, we apply the virtually torsion-free case to $K/T$, thereby obtaining a subgroup $Y$ containing $T$ such that $Y/T$ is $\pi$-minimax and $[G:KY]$ is finite. By Lemma 3.2, $Y$ has a $\pi$-minimax subgroup $X$ such that $[Y:TX]$ is finite. Since $[G:KX]$ is thus finite, this completes the proof of Theorem B. 
\end{proof}

 \begin{remark}{\rm Although Theorem A holds for all groups of finite rank that are virtually torsion-free, Theorem B cannot be extended to the case where $G/K$ is not minimax. The reason is that Proposition 3.1 only applies to minimax groups (see [{\bf 11}, Section~6]).} 
 \end{remark}
 
 Below we describe an example that demonstrates that the hypothesis that $G/K$ is virtually torsion-free cannot be removed from Theorem B. 

\begin{example} {\rm Assume that $p$ and $q$ are distinct primes. Define an action of $C_\infty=\langle t\rangle$ on $\mathbb Z[1/pq]$ by
$t\cdot x=px$ for $x\in \mathbb Z[1/pq]$. Let $G=\mathbb Z[1/pq]\rtimes \langle t\rangle$ and $K=\mathbb Z[1/p]$. Then $K\lhd G$, and $G/K$ is $\{q\}$-minimax but not virtually torsion-free.  Moreover, $K$ has no $\{q\}$-minimax near supplement. To see this, suppose that  $X$ is a near supplement to $K$. Then $K\cap X\neq 1$; otherwise $X$ would contain a quasicyclic subgroup. Since $K\cap X\lhd KX$, it follows that $K\cap X$ contains a copy of $\mathbb Z[1/p]$. Thus $X$ cannot be $\{q\}$-minimax.}
\end{example}

As shown below in Corollary 3.4, Theorem B implies that, if $G/K$ is not virtually torsion-free, one is still assured of finding a $\pi$-minimax subgroup $X$ satisfying the weaker condition that  $h(KX)=h(G)$. We refer to such a subgroup as a {\it Hirsch-length supplement}. Furthermore, if $G/K$ happens to be finitely generated, then any Hirsch-length supplement is necessarily a near supplement. This is due to the fact that, in a finitely generated solvable minimax group, every subgroup with the same Hirsch length as the group must have finite index (Lemma 3.5). 

\begin{corollary} Let $G$ be a solvable group of finite rank. Assume that $K$ is a normal subgroup of $G$ such that $G/K$ is $\pi$-minimax. Then $K$ has a $\pi$-minimax Hirsch-length supplement. Furthermore, if $G/K$ is finitely generated, then any Hirsch-length supplement to $K$ is a near supplement. 
\end{corollary}

\begin{proof} Let $L$ be a normal subgroup of $G$ containing $K$ such that $L/K$ is the torsion radical of $G/K$. Applying Theorem B, we obtain a $\pi$-minimax subgroup $X$ of $G$ such that $[G:LX]<\infty$. Moreover, 
\begin{equation*} h(KX)=h(K)+h(X)-h(K\cap X)=h(L)+h(X)-h(L\cap X)=h(LX).\end{equation*}
Hence $X$ is a Hirsch-length supplement to $K$. 
It remains to show that, if $G/K$ is finitely generated, then every Hirsch-length supplement to $K$ is  really a near supplement. This assertion is an immediate consequence of Lemma 3.5 below. 
\end{proof}

The following property is undoubtedly well known; nonetheless, for the reader's convenience, we provide a proof. 

\begin{lemma} Let $G$ be a finitely generated solvable minimax group. If $H<G$ and $h(H)=h(G)$, then $[G:H]$ is finite. 
\end{lemma}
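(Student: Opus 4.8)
The plan is to argue by induction on the derived length of $G$, reducing to the abelian case where the statement is elementary. If $G$ is abelian and finitely generated, then $G$ is the direct sum of a free abelian group of rank $h(G)$ and a finite group; a subgroup $H$ with $h(H)=h(G)$ then has finite index by the structure theory of finitely generated abelian groups, so the base case is immediate. For the inductive step, I would pass to $G'$ and $G/G'$ and try to transfer the finite-index conclusion through the extension $1\to G'\to G\to G/G'\to 1$.

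The natural approach is to consider the image $\bar H=HG'/G'$ of $H$ in $G/G'$ and the intersection $H\cap G'$ in $G'$. From $h(H)=h(H\cap G')+h(\bar H)$ and $h(G)=h(G')+h(G/G')$ together with $h(\bar H)\le h(G/G')$ and $h(H\cap G')\le h(G')$, the equality $h(H)=h(G)$ forces both $h(\bar H)=h(G/G')$ and $h(H\cap G')=h(G')$. Since $G/G'$ is a finitely generated abelian minimax group, the base case gives $[G/G':\bar H]<\infty$, i.e. $[G:HG']<\infty$. The remaining task is to show $[HG':H]=[G':H\cap G']<\infty$, which would follow from the inductive hypothesis \emph{if} we knew $H\cap G'$ were being compared inside a finitely generated group. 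The obstacle here is precisely that $G'$ need not be finitely generated even when $G$ is (this is typical for solvable minimax groups), so the inductive hypothesis does not apply verbatim to the pair $H\cap G' < G'$.

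To circumvent this, I would work not with the full derived subgroup but with a carefully chosen finitely generated normal subgroup. Since $G$ is finitely generated solvable minimax, one can choose a finitely generated normal subgroup $M$ of $G$ with $G/M$ finitely generated abelian-by-finite and $h(M) < h(G)$ (for instance, take $M$ to be suitably generated inside a term of the derived series, or use that $G$ is finitely generated over a finitely generated nilpotent, or polycyclic, normal subgroup with abelian-by-finite quotient after passing to finite index). Replacing $G$ by a finite-index subgroup — permissible since finite index is transitive and we only need the conclusion up to finite index — we may assume $G/M$ is finitely generated abelian. Then $h(HM/M)=h(G/M)$ forces $[G:HM]<\infty$ by the abelian case, and $h(H\cap M)=h(M)$ lets us apply the inductive hypothesis to $H\cap M < M$ (now legitimately, as $M$ is finitely generated solvable minimax of smaller derived length, or smaller Hirsch length, whichever induction parameter is cleaner), giving $[M:H\cap M]<\infty$. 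Combining, $[G:H]=[G:HM]\cdot[HM:H]=[G:HM]\cdot[M:H\cap M]<\infty$.

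The main obstacle, as indicated, is the passage from $G$ to an appropriate smaller finitely generated normal subgroup $M$: one must verify that a finitely generated solvable minimax group has a finitely generated normal subgroup with finitely-generated-abelian-by-finite quotient and strictly smaller Hirsch length. This is a standard structural fact — it follows from Proposition 3.1 applied after passing to a finite-index subgroup (take $M$ generated by a finite generating set of $G$ together with finitely many elements ensuring normality, inside the Fitting subgroup or inside $G'$, using that $G/\mathrm{Fitt}(G)$ is finitely generated in the virtually torsion-free case and that torsion in a minimax group is Černikov hence harmless after a finite-index reduction) — but it requires a little care to state precisely. Once $M$ is in hand, the rest is the bookkeeping with Hirsch lengths sketched above, using repeatedly the additivity $h(UV)=h(U)+h(V)-h(U\cap V)$ for subgroups with one of them normal.
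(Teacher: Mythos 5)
Your reduction has a genuine gap at exactly the point you flag as ``the main obstacle'': the structural fact you need is false. A finitely generated solvable minimax group need \emph{not} contain a finitely generated normal subgroup $M$ with $h(M)<h(G)$ and $G/M$ finitely-generated-abelian-by-finite. Take $G=\mathbb Z[1/p]\rtimes\langle t\rangle$ with $t$ acting by multiplication by $p$. This $G$ is $2$-generated and minimax, but every normal subgroup $N$ not contained in $A=\mathbb Z[1/p]$ already contains all of $A$ (a commutator computation shows $N$ contains $(1-p^{-k})\mathbb Z[1/p]=\mathbb Z[1/p]$ once $N$ maps onto $k\mathbb Z$), hence has $h(N)=2=h(G)$; and the normal subgroups contained in $A$ are the ideals of $\mathbb Z[1/p]$, none of which is both nontrivial and finitely generated. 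The same obstruction persists after passing to finite-index subgroups, since their derived subgroups are again all of $\mathbb Z[1/p]$. Nor can you drop the requirement that $M$ be finitely generated and hope to induct anyway: the lemma is genuinely false without finite generation ($\mathbb Z<\mathbb Z[1/p]$ has equal Hirsch length but infinite index), so the inductive hypothesis really cannot be applied to $H\cap M<M$ unless $M$ is finitely generated. Your Hirsch-length bookkeeping and the abelian base case are fine; it is the existence of $M$ that fails.

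The paper circumvents this by inducting on the minimax rank $m(G)$ rather than on derived length, and by choosing the bottom layer differently: it takes an infinite abelian subgroup $A$ with $m(A)$ \emph{as small as possible} subject to $[G:N_G(A)]<\infty$, normalizes it after a finite-index reduction, and applies the inductive hypothesis to $AH/A<G/A$ (legitimate because quotients of finitely generated groups are finitely generated --- the top of the extension is never the problem). The step you cannot complete, namely $[A:A\cap H]<\infty$, is then handled not by finite generation of $A$ but by the minimality of $m(A)$: since $A\cap H\lhd AH$ and $AH$ has finite index in $G$, minimality forces $A\cap H$ to be either finite or of finite index in $A$, and the finite case contradicts $h(H)=h(G)$ when $A$ is nontorsion; when $A$ is torsion, finite generation of $AH$ plus the \v{C}ernikov structure of $A$ gives $FH=AH$ for a finite $F\le A$. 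If you want to salvage your argument, you would need to replace ``choose $M$ finitely generated'' with some such minimality device that controls $A\cap H$ directly.
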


\begin{proof}

The proof is by induction on $m(G)$. If $m(G)=0$, then the conclusion follows immediately.  Assume that $m(G)>0$. Let $A$ be an infinite abelian subgroup with $m(A)$ as small as possible subject to the condition that $[G:N_G(A)]$ is finite.  Without any real loss of generality, we may replace $G$ by $N_{G}(A)$ and $H$ by ${H\cap N_{G}(A)}$, rendering $A$ normal in $G$.  In this case, we have 
\begin{equation*} h(AH/A) = h(H)-h(H\cap A)\geq h(G)-h(A) = h(G/A),\end{equation*}
implying that $h(AH/A)=h(G/A)$. Hence, by the inductive hypothesis, $AH$ has finite index in~$G$. 

At this juncture, we distinguish the case of $A$ being torsion from that where $A$ is not torsion. First we suppose that $A$ is torsion. Because $G$ is finitely generated, $AH$ is also finitely generated. Thus there is a finite subset $\mathcal{F}$ of $A$ such that $\langle H\cup \mathcal{F}\rangle=AH$. Since $A$ is a \v{C}ernikov group,  it has a characteristic finite subgroup $F$ containing $\mathcal{F}$. Then $FH=AH$, so that $[AH:H]$ is finite. Hence $[G:H]$ is finite.

Next we treat the case that  $A$ is not torsion. 
Since $H\cap A$ is normal in $AH$, our choice of $A$ ensures that $H\cap A$ is either finite or has finite index in $A$. 
If $H\cap A$ is finite, then $h(H)=h(G/A)<h(G)$, a contradiction.
Therefore, $H\cap A$ has finite index in $A$. It follows, then, that $[AH:H]$ is finite. Thus $[G:H]$ is finite.
\end{proof}

We conclude this section by applying Corollary 3.4 to derive a new result on the homological dimension of solvable groups. It is conjectured in \cite{peter3} that, if $k$ is a field and $G$  a solvable group, then $\mathrm{hd}_{k}(G)$ is either infinite or equal to the Hirsch length of $G$. This is established by U. Stammbach \cite{stammbach} if $k$ has characteristic zero and proved in  [{\bf 4}, Theorem I.3] for arbitrary fields in the abelian-by-polycyclic case. We refer the reader to \cite{peter3} for background information on this problem, as well as to \cite{bieri} for the basic facts about homological dimension. Here we employ Corollary~3.4 to generalize the argument in \cite{peter3} for the abelian-by-polycyclic case, obtaining the following proposition.  

\begin{proposition}
Let $p$ be a prime and $k$ a field of characteristic $p$. Suppose that $G$ is an extension of an abelian group by a solvable group that has no quasicyclic $p$-sections.  If $G$ has finite homological dimension over $k$, then $\mathrm{hd}_{k}(G)=h(G)$.
\end{proposition}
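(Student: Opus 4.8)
The plan is to reduce the statement to an application of Theorem C by exploiting the standard inequality $\mathrm{hd}_k(G)\le \mathrm{hd}_k(H)+\mathrm{hd}_k(G/H)$ for a normal subgroup $H$, together with the fact (see \cite{bieri}) that $h(G)$ is always a lower bound for $\mathrm{hd}_k(G)$ when $\mathrm{hd}_k(G)$ is finite. So the real content is to produce the matching upper bound $\mathrm{hd}_k(G)\le h(G)$. Write $G$ as an extension $1\to A\to G\to Q\to 1$ with $A$ abelian and $Q$ a $\{p\}'$-minimax group. If $G$ has finite homological dimension over $k$, then so does $Q$, and since $Q$ is $\{p\}'$-minimax (in particular solvable of finite torsion-free rank) the abelian-by-polycyclic case, or more directly Stammbach's argument together with the known structure of minimax groups, should give $\mathrm{hd}_k(Q)=h(Q)$; I would first isolate this as a sub-claim about $\{p\}'$-minimax groups. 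The subtlety is that $A$ need not be finitely generated, and over a field of characteristic $p$ the homological dimension of an abelian group can exceed its Hirsch length (e.g.\ $C_{p^\infty}$ has infinite $k$-homological dimension when $\mathrm{char}\,k=p$), so one cannot simply bound $\mathrm{hd}_k(A)$ by $h(A)$ blindly.

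To handle $A$, I would use that $\mathrm{hd}_k(G)<\infty$ forces $G$, hence $A$, to have no $p$-torsion of the bad kind: more precisely, a theorem on homological dimension implies that a group of finite $k$-homological dimension has no elements of order $p$ when $\mathrm{char}\,k=p$, and in fact its abelian subgroups must be torsion-free-by-(finite $p'$-torsion), so that $\mathrm{hd}_k(A)=h(A)$ after discarding a harmless finite $p'$-group (which contributes $0$ to $k$-homological dimension). Actually the cleanest route is: since $\mathrm{hd}_k(G)<\infty$, the group $G$ is $k$-torsion-free in the sense relevant to homological dimension, so we may replace $A$ by its torsion-free part without changing $h$ or the finiteness of $\mathrm{hd}_k$; then $A$ is a torsion-free abelian group of finite rank, so $\mathrm{hd}_k(A)=h(A)$. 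Combining, $\mathrm{hd}_k(G)\le \mathrm{hd}_k(A)+\mathrm{hd}_k(Q)=h(A)+h(Q)=h(G)$, and with the reverse inequality we are done.

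Here is where Theorem C enters and why it is needed rather than the cruder estimate above: the inequality $\mathrm{hd}_k(G)\le \mathrm{hd}_k(A)+\mathrm{hd}_k(Q)$ is valid, but establishing $\mathrm{hd}_k(Q)=h(Q)$ for a general $\{p\}'$-minimax group $Q$ is exactly the kind of statement one proves by finding, inside a larger group, a polycyclic (Hirsch-length) supplement and invoking the abelian-by-polycyclic result of \cite{peter3}. Concretely, $G$ itself is abelian-by-($\{p\}'$-minimax); apply Theorem C with $K=A$ and $\pi=\{p\}'$ to obtain a $\{p\}'$-minimax subgroup $X$ with $h(AX)=h(G)$. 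Then $AX$ is abelian-by-minimax; one further application of Theorem C (or of \cite{peter3} after passing to a polycyclic supplement inside $X$ for the torsion part) reduces to the abelian-by-polycyclic situation, where $\mathrm{hd}_k$ equals the Hirsch length. Since $AX$ has the same Hirsch length as $G$ and finite $k$-homological dimension, and since a subgroup cannot have larger homological dimension, we get $h(G)=h(AX)=\mathrm{hd}_k(AX)\le \mathrm{hd}_k(G)$, while the upper bound comes from the extension estimate above.

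The main obstacle I anticipate is organizing the bookkeeping between the torsion and torsion-free parts of $A$ and of $K$ in the minimax reductions, and verifying that the $k$-homological dimension genuinely detects only the Hirsch-length after the torsion of order prime to $p$ (which is $k$-projective) is stripped away. Passing to finite-index subgroups is harmless for $h$ but must be checked to be harmless for $\mathrm{hd}_k$ in characteristic $p$, which it is precisely because we are in the regime where $G$ has no $p$-torsion; one should cite the relevant facts from \cite{bieri} for this. Once these routine structural reductions are in place, Theorem C does the essential work of converting the minimax quotient into a polycyclic one of the same Hirsch length, and the result follows from \cite{peter3}.
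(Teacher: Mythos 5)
Your proposal inverts the difficulty of the problem. The inequality $\mathrm{hd}_k(G)\le h(G)$ for a solvable group of finite Hirsch length without $k$-torsion is the \emph{known} direction ([{\bf 3}, Theorem 7.11]); the assertion you take as given, that $h(G)$ is a lower bound for a finite $\mathrm{hd}_k(G)$, is precisely the conjecture of \cite{peter3} that this proposition is meant to verify in a new case (it is Stammbach's theorem in characteristic zero and the Bridson--Kropholler theorem in the abelian-by-polycyclic case, but it is not a general fact one can cite from \cite{bieri}). So the ``real content'' is the lower bound, not the upper bound, and most of your effort goes into the easy half. Your preliminary reductions (killing $\tau(G)$, passing to a finitely generated subgroup realizing both $\mathrm{hd}_k$ and $h$, hence to the minimax case) do match the paper's opening moves.

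Your third paragraph does gesture at the lower bound via $h(G)=h(AX)=\mathrm{hd}_k(AX)\le\mathrm{hd}_k(G)$, but the middle equality is unjustified: $AX$ is abelian-by-($\{p\}'$-minimax), not abelian-by-polycyclic, and the proposed reduction --- ``one further application of Theorem C'' or ``passing to a polycyclic supplement inside $X$'' --- fails. Theorem C only produces a $\pi$-minimax supplement when the quotient is $\pi$-minimax, so it cannot manufacture a polycyclic supplement here; and a non-polycyclic minimax group such as $\mathbb Z[1/q]\rtimes\mathbb Z$ contains no polycyclic subgroup of full Hirsch length, since by Lemma 4.7 any such subgroup would have finite index. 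You have therefore reduced the proposition to an instance of itself. The paper closes exactly this gap by a different device: after using Theorem C to arrange $G=AX$ and replacing $G$ by $A\rtimes X$, it forms $\tilde G=(\tilde A\rtimes\langle\theta\rangle)\rtimes X$ with $\tilde A=A^{\mathbb Z[1/m]}$ and $\theta(x)=x^m$, so that $\tilde G$ is an ascending HNN extension of $G$ whose normal subgroup $K=\tilde A\rtimes\langle\theta\rangle$ is constructible, hence an inverse duality group of type FP with $H^n(K,kK)$ $k$-free. The product formula $\mathrm{hd}_k(\tilde G)=\mathrm{hd}_k(K)+\mathrm{hd}_k(X)$, the identity $\mathrm{hd}_k(K)=h(K)$, Stammbach's argument giving $\mathrm{hd}_k(X)=h(X)$ for the $\{p\}'$-minimax group $X$, and the Mayer--Vietoris bound $\mathrm{hd}_k(\tilde G)\le\mathrm{hd}_k(G)+1$ then combine to give $\mathrm{hd}_k(G)\ge h(G)$. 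None of this machinery appears in your proposal, and without it the argument does not close.
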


\begin{proof}

That $G$ has finite homological dimension over $k$ yields that $G$ has finite Hirsch length. In addition, it implies that $G$ has no $k$-torsion, meaning that any finite element order in $G$ is invertible in $k$.  Let $T=\tau(G)$.  Appealing to the LHS spectral sequence, we will argue that $\mathrm{hd}_k(G/T)=\mathrm{hd}_k(G)$. Since $\mathrm{hd}_k(T)=0$, we deduce right away that $\mathrm{hd}_k(G/T)\geq \mathrm{hd}_k(G)$. To verify the reverse inequality, set $n=\mathrm{hd}_k(G/T)$ and let $M$ be a $k[G/T]$-module such that $H_n(G/T,M)\neq 0$. As $H_n(G,M)=H_n(G/T,M)$, it follows  that $\mathrm{hd}_k(G)\geq n$. Because ${\mathrm{hd}_k(G/T)=\mathrm{hd}_k(G)}$, we may, without any real loss of generality, assume that $T=1$. As observed in [{\bf 3}, Proposition 6.14], there is a finitely generated subgroup of $G$ with the same homological dimension over $k$ as $G$.  In addition, there is a finitely generated subgroup with the same Hirsch length as $G$. This means that we can find a finitely generated subgroup enjoying both of these properties.  Consequently, it suffices to consider the case where $G$ is finitely generated and therefore, by the principal result in \cite{robinson4}, minimax. 

Invoking Corollary 3.4, and replacing $G$ by a subgroup of finite index if necessary, we may assume that $G$ has a $\{p\}'$-minimax subgroup $X$ and an abelian normal subgroup $A$ such that $AX=G$. The action of $X$ on $A$ by conjugation allows us to construct a semidirect product $A\rtimes X$. Moreover, there is an epimorphism
$\phi: A\rtimes X\to G$ such that $B={\rm Ker}\,\phi$ is isomorphic to $A\cap X$. We claim that, if $\mathrm{hd}_k(A\rtimes X)=h(A\rtimes X)$, then $\mathrm{hd}_k(G)=h(G)$.  To establish this claim, assume that
${\mathrm{hd}_k(A\rtimes X)}={h(A\rtimes X)}$. From the LHS spectral sequence we know that
${\mathrm{hd}_k(A\rtimes X)\leq} {\mathrm{hd}_k(B) + \mathrm{hd}_k(G)}$. Hence $\mathrm{hd}_k(G)\geq h(A\rtimes X)-h(B)=h(G).$ Also, $\mathrm{hd}_k(G)\leq h(G)$ by [{\bf 3}, Theorem 7.11].
Therefore, the above claim holds; in other words, we do not really lose any generality in supposing that $G=A\rtimes X$. 

The argument advanced by U. Stammbach \cite{stammbach} for fields of characteristic zero also serves to show that $\mathrm{hd}_{k}(X)=h(X)$ (see [{\bf 4}, Lemma I.9]). Let $m$ be the product of all the primes in ${\rm spec}(A)$. Then $\tilde A=A\otimes \mathbb Z[1/m]$ is isomorphic to the direct sum of $r$ copies of $\mathbb Z[1/m]$, where $r=h(A)$.   Let $\theta$ be the automorphism of $\tilde A$ defined by $\theta(x)=mx$. Set $K=\tilde A\rtimes\langle\theta\rangle$. The action of $X$ on $A$ induces an action of $X$ on $\widetilde A$. Now define an action of $X$ on $K$ by employing the action of $X$ on $\tilde A$ and allowing $X$ to centralize $\theta$. Using this action, form the semidirect product $\tilde G=K\rtimes X$. As an ascending HNN extension of the direct sum of $r$ copies of $\mathbb Z$, $K$ is constructible and thus an inverse duality group (see [{\bf 2}, Theorem~9]).  By [{\bf 3}, Theorem 9.4], it follows  that $K$ has type FP and $H^n(K,kK)$ is $k$-free. Hence, in view of  [{\bf 3}, Theorem~5.5], $\mathrm{hd}_{k}(\widetilde{G})=\mathrm{hd}_k(K)+\mathrm{hd}_k(X)$. Moreover, according to  [{\bf 4}, Proposition~I.11], $\mathrm{hd}_k(K)=h(K)$. In addition, since $\tilde{G}$ can be viewed as an ascending HNN extension of $G$, the Mayer-Vietoris sequence yields that ${\mathrm{hd}_k(\tilde{G})\leq \mathrm{hd}_k(G)+1}$. 
We may thus argue as follows. 
\begin{equation*} \mathrm{hd}_{k}(G)\ge\mathrm{hd}_{k}(\tilde G)-1=h(K)+h(X)-1=h(A)+h(X)=h(G).\end{equation*} 
As remarked above, the reverse inequality is already known to hold; hence $\mathrm{hd}_k(G)=h(G)$. 

\end{proof}

\section{Near complements}

In this section, we consider the same situation as in Theorem B; that is, we suppose that $G$ is a solvable group of finite rank and $K\lhd G$ such that $G/K$ is $\pi$-minimax and virtually torsion-free. Here our goal is to determine circumstances under which Theorem B can be strengthened to yield a near complement, rather than just a $\pi$-minimax near supplement. In order to ensure this, it will be necessary to impose two quite stringent conditions on $K$. First, we will assume that $K$ is Noetherian as a $G$-operator group; second, we will require that $K$ is, in some sense, the ``antithesis" of a torsion-free $\pi$-minimax group. 

Before we concern ourselves with the second hypothesis, we point out that the Noetherian property, as well as the condition that the quotient is virtually torsion-free, applies to any normal subgroup in a solvable minimax group that satisfies the maximum condition on normal subgroups. Moreover, solvable minimax groups with the latter attribute are ubiquitous, especially in geometric group theory. In particular, every constructible solvable group enjoys this property. Recall that the class of constructible groups is the smallest class containing the trivial group that is closed under forming finite extensions, generalized free products in which both factors as well as the amalgamated subgroup are constructible, and HNN extensions in which the base group and associated subgroups are constructible. Furthermore, as shown in \cite{constructible}, the class of constructible solvable groups is the smallest class of groups containing the trivial group that is closed with respect to forming extensions by finite solvable groups and ascending HNN extensions with base group in the class. As well as being of interest to geometers, constructible solvable groups are important in homological algebra since the torsion-free constructible groups are precisely those solvable groups that have type ${\rm FP}$ (see \cite{peter2}). 

Now we turn our attention to the second hypothesis that we will require for our near-complement result, namely, that $K$ fails to be a torsion-free $\pi$-minimax group in some drastic fashion. In order to impart a more precise form to this idea, we introduce the following class. 

\begin{definition}{\rm 
The class  $\mathfrak{X}_{\pi}$ is the class of all groups $G$ such that every $\pi$-minimax quotient of $G$ is torsion. }
\end{definition}

To illustrate this definition, we consider some examples. Notice that every torsion group belongs to $\mathfrak{X}_{\pi}$, whereas all $\pi$-minimax groups that are not torsion fall outside $\mathfrak{X}_{\pi}$.
Another elementary observation is that a torsion-free abelian group of rank one is a member of 
$\mathfrak{X}_{\pi}$ if and only if it is not $\pi$-minimax.  From this example we can see that the class $\mathfrak{X}_{\pi}$ fails to be closed under passage to subgroups. 

Although $\mathfrak{X}_{\pi}$ is not subgroup-closed, the class enjoys the four closure properties described below in Lemma 4.1. Since the proofs of these are straightforward, we leave them to the reader. 

\begin{lemma}  (i) If $G$ belongs to $\mathfrak{X}_{\pi}$, then so does every subgroup of finite index in $G$.   

\noindent (ii) The class $\mathfrak{X}_{\pi}$ is closed under forming quotients and extensions. 

\noindent (iii)  Let $G$ be a group. If $\{K_\alpha:\alpha\in I\}$ is a family of normal subgroups of $G$ such that each $K_\alpha$ lies in $\mathfrak{X}_\pi$, then the join of the $K_\alpha$  also belongs to $\mathfrak{X}_\pi$. 
\end{lemma}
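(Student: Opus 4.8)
The plan is to base everything on three standing facts: the class of $\pi$-minimax groups is closed under passing to subgroups and to quotients and under forming extensions; every torsion group --- in particular every finite group --- lies in $\mathfrak{X}_{\pi}$, since every quotient of a torsion group is torsion; and an extension of a torsion group by a torsion group is torsion. With these in hand, part (ii) is immediate. If $G\in\mathfrak{X}_{\pi}$ and $\bar G$ is a quotient of $G$, then a $\pi$-minimax quotient of $\bar G$ is a $\pi$-minimax quotient of $G$, hence torsion, so $\bar G\in\mathfrak{X}_{\pi}$. If $1\to N\to G\to Q\to 1$ with $N,Q\in\mathfrak{X}_{\pi}$ and $\phi\colon G\twoheadrightarrow M$ has $M$ $\pi$-minimax, then $\phi(N)$ is a subgroup of $M$, hence $\pi$-minimax, and a quotient of $N$, hence torsion; and $M/\phi(N)$ is a $\pi$-minimax quotient of $Q$, hence torsion; so $M$ is torsion and $G\in\mathfrak{X}_{\pi}$. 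In particular, every group that is $\mathfrak{X}_{\pi}$-by-finite lies in $\mathfrak{X}_{\pi}$.

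Part (i) is the substantive one, precisely because $\mathfrak{X}_{\pi}$ is not subgroup-closed, and the device I would use is the normal core. First suppose $N\lhd G$ has finite index, $G\in\mathfrak{X}_{\pi}$, and $\phi\colon N\twoheadrightarrow M$ with $M$ $\pi$-minimax. Set $K=\ker\phi\lhd N$ and let $L=\bigcap_{g\in G}gKg^{-1}$ be the normal core of $K$ in $G$. Since $K\lhd N\lhd G$, the subgroup $gKg^{-1}$ depends only on the coset $gN$, so $L$ is an intersection of finitely many conjugates $g_{1}Kg_{1}^{-1},\dots,g_{n}Kg_{n}^{-1}$ with $n=[G:N]$; as the inner automorphism $x\mapsto g_{i}^{-1}xg_{i}$ of $N$ sends $g_{i}Kg_{i}^{-1}$ to $K$, the group $N/L$ embeds into $\prod_{i=1}^{n}N/g_{i}Kg_{i}^{-1}\cong M^{n}$ and is therefore $\pi$-minimax. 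Hence $G/L$ is an extension of the $\pi$-minimax group $N/L$ by the finite group $G/N$, so $G/L$ is $\pi$-minimax; since $G\in\mathfrak{X}_{\pi}$ it is torsion, whence $N/L$ is torsion and so is its quotient $M\cong N/K$. Thus $N\in\mathfrak{X}_{\pi}$. For an arbitrary subgroup $H\le G$ of finite index, take $N$ to be the normal core of $H$ in $G$: then $N\lhd G$ has finite index, so $N\in\mathfrak{X}_{\pi}$ by the argument just given, and $H$ is $\mathfrak{X}_{\pi}$-by-finite, so $H\in\mathfrak{X}_{\pi}$ by (ii).

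For part (iii), write $J=\langle N_{\alpha}:\alpha\in I\rangle$ and let $\phi\colon J\twoheadrightarrow M$ with $M$ $\pi$-minimax. Since $\phi$ is surjective, $M=\langle\phi(N_{\alpha}):\alpha\in I\rangle$, and each $\phi(N_{\alpha})$ is a normal subgroup of $M$ (because $N_{\alpha}\lhd J$) that is a $\pi$-minimax quotient of $N_{\alpha}\in\mathfrak{X}_{\pi}$, hence torsion. Every element of $M$ lies in a product of finitely many of the subgroups $\phi(N_{\alpha})$, and a finite product $P_{1}\cdots P_{k}$ of torsion normal subgroups is torsion by induction on $k$, using $P_{1}\cdots P_{k}/P_{1}\cdots P_{k-1}\cong P_{k}/(P_{k}\cap P_{1}\cdots P_{k-1})$. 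Thus $M$ is torsion, so $J\in\mathfrak{X}_{\pi}$. The one place where care is genuinely needed is part (i): because $\mathfrak{X}_{\pi}$ does not pass to arbitrary subgroups, one must replace $\ker\phi$ by its normal core in $G$ and realize the ensuing finite-index quotient of $N$ inside a finite direct power of the prospective $\pi$-minimax quotient; every other step is a one-line consequence of the stated stability properties.
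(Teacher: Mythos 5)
The paper leaves this lemma unproved, so there is nothing to compare against; your argument is a legitimate filling-in. Parts (ii) and (iii) are complete and correct, and in part (i) the normal-core device is exactly the right idea: it is the one place where the failure of subgroup-closure of $\mathfrak{X}_{\pi}$ has to be circumvented, and realizing $N/L$ inside the finite direct power $M^{n}$ does that cleanly.

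There is, however, one inference in part (i) that deserves scrutiny: the claim that $G/L$, being an extension of the $\pi$-minimax group $N/L$ by the finite group $G/N$, is itself $\pi$-minimax. Under the paper's definitions a $\pi$-minimax group is required to be \emph{solvable}, so this step needs $G/N$ (hence $G/L$) to be solvable; only then is $G/L$ a $\pi$-minimax quotient of $G$ to which the hypothesis $G\in\mathfrak{X}_{\pi}$ applies. When $G$ is solvable --- as it is in every application of the lemma in Section 5 --- the step is fine. For arbitrary $G$ it fails, and in fact statement (i) itself is false without some such restriction: take $G=\mathbb{Z}^{4}\rtimes A_{5}$ with $A_{5}$ acting via the standard integral representation $\{x\in\mathbb{Z}^{5}:\sum x_{i}=0\}$. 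Any normal subgroup with solvable quotient must contain $A_{5}$, hence its normal closure, which (since the coinvariants of this module vanish) is all of $G$; so every solvable quotient of $G$ is trivial and $G\in\mathfrak{X}_{\pi}$, yet the finite-index normal subgroup $\mathbb{Z}^{4}$ is a nontorsion polycyclic quotient of itself and so lies outside $\mathfrak{X}_{\pi}$. You should therefore either record the standing assumption that $G$ is solvable (surely what the authors intend) or note explicitly that (i) is being applied only where the relevant finite quotient is solvable.
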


Statements (ii) and (iii) in Lemma 4.1 make $\mathfrak{X}_{\pi}$ a radical class in the sense used in [{\bf 9}, Section 1.3]. 
Employing the terminology from [{\bf 9}, Section 1.3], we 
define the {\it $\mathfrak{X}_{\pi}$-radical} of a group $G$, denoted $\rho_{\pi}(G)$, to be the join of all the normal  $\mathfrak{X}_{\pi}$-subgroups of $G$. In studying the properties of $\mathfrak{X}_{\pi}$ and the $\mathfrak{X}_{\pi}$-radical, we will draw upon the wealth of information about radical classes contained in [{\bf 9}, Section 1.3]. 

Our definition of the class $\mathfrak{X}_{\pi}$ is inspired by the notion of an {\it upper-finite group} from [{\bf 10}, Section 10.4]; this is a group whose finitely generated quotients are all finite. To grasp the connection, consider the special case $\pi=\emptyset$. The class $\mathfrak{X}_{\emptyset}$ is the class of groups all of whose polycyclic quotients are finite. Using the fact that solvable groups of finite rank are torsion-by-nilpotent-by-polycyclic (Proposition~1.2(iii)), it follows that a solvable group of finite rank is an $\mathfrak{X}_{\emptyset}$-group if and only if it is upper-finite. Hence the 
$\mathfrak{X}_{\emptyset}$-radical of a solvable group of finite rank coincides with the upper-finite radical from [{\bf 10}, Section 10.4]. 

The result on near complements that we will prove in this section is stated below.

\begin{thmC} Let $G$ be a solvable group of finite rank and $K\lhd G$. Assume that $K$ is a member of $\mathfrak{X}_{\pi}$ and $K$ is Noetherian as a $G$-operator group. Suppose further that $G/K$ is $\pi$-minimax and virtually torsion-free. Then there is a near complement to $K$ in $G$.
\end{thmC}

The first step towards proving Theorem C is to establish that the property of belonging to $\mathfrak{X}_{\pi}$ is inherited by submodules of certain Noetherian modules that lie in $\mathfrak{X}_{\pi}$. For this purpose, we require the following elementary observation.

 \begin{lemma} Assume that $G$ is a group and $K$ a normal subgroup of $G$ that is solvable and has finite rank. Then $K$ is Noetherian as a $G$-operator group if and only if every $G$-operator quotient of $K$ is virtually torsion-free.  
 \end{lemma}
 
\begin{proof}  For the ``only if" part of the lemma, we need only observe that a Noetherian $G$-operator group that is torsion and solvable of finite rank must be finite. To verify the ``if" direction, suppose that $K$ is not Noetherian. Then $K$ has an infinite ascending series of $G$-invariant subgroups that never stabilizes. Since the sequence of Hirsch lengths of the subgroups in this series must eventually become constant, the series gives rise to a $G$-operator quotient of $K$ with an infinite torsion radical. 
\end{proof}

Below we prove our inheritance property for modules.

\begin{lemma}
Let $G$ be an abelian group and $A$ a Noetherian $\mathbb ZG$-module that has finite rank as an abelian group. If the underlying abelian group of $A$ is in the class $\mathfrak{X}_{\pi}$, then the same is true for every submodule of $A$. 
\end{lemma}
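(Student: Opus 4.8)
The plan is to reduce to a situation where $A$ has no $\pi$-minimax torsion and then exploit the structure of $A^{\mathbb{Q}}$ as a module over a finitely generated commutative ring. Let $B$ be a submodule of $A$; we must show that every $\pi$-minimax quotient $\bar B$ of $B$ is torsion. Since $A$ is minimax, $\tau(A)$ is a \v{C}ernikov group, and replacing $A$ by $A/A_{\pi}$, where $A_{\pi}$ is the $\pi$-torsion part of $\tau(A)$ (which is a finite-index-in-$\tau(A)$, hence $G$-invariant, submodule after possibly enlarging), we may reduce to the case where the torsion subgroup of $A$ is a $\pi'$-group. Indeed $A_\pi$ lies in $\mathfrak X_\pi$ trivially, the extension hypotheses of Lemma~5.1(ii) let us pass between $A$ and $A/A_\pi$, and a submodule of $A$ maps to a submodule of $A/A_\pi$ with kernel a submodule of $A_\pi$.

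Next I would pass to the torsion-free situation. Write $T = \tau(A)$, now a $\pi'$-group, and consider $B \cap T \leq B$. Any $\pi$-minimax quotient of $B$ restricts to a $\pi$-minimax quotient of the image of $B\cap T$, which is a $\pi$-minimax $\pi'$-torsion group, hence finite; so by an extension argument it suffices to treat $B/(B\cap T)$, a submodule of $A/T$. Thus we may assume $A$ is torsion-free minimax. Now the key structural input: since $A$ is a Noetherian $\mathbb{Z}G$-module with $G$ finitely generated abelian, $\mathbb{Z}G$ is Noetherian and $A$ is a finitely generated $\mathbb{Z}G$-module. Consider $A^{\mathbb{Q}} = A \otimes \mathbb{Q}$. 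If $B$ is a submodule and $\bar B$ a $\pi$-minimax quotient of $B$ that is \emph{not} torsion, then $\bar B$ has a torsion-free $\pi$-minimax rationally irreducible $\mathbb{Z}G$-module section $S$. I would argue that $S$ — being a section of $B$, hence of $A$ — forces $A$ itself to have a torsion-free $\pi$-minimax section, and the hypothesis ``$A \in \mathfrak X_\pi$'' (applied to $A$ as its own quotient) says precisely that the only $\pi$-minimax quotient of $A$ is torsion. The subtlety is that $\mathfrak X_\pi$ is not section-closed, only quotient-closed, so I cannot directly conclude from ``$A$ has a $\pi$-minimax section'' that ``$A$ has a nontorsion $\pi$-minimax quotient.'' This is the main obstacle.

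To get around it, the plan is to use the fact that $A^{\mathbb{Q}}$ has a finite composition series of rationally irreducible factors, and that $A$ is finitely generated over $\mathbb{Z}G$ with $\mathbb{Z}G$ Noetherian. Suppose $B$ has a nontorsion $\pi$-minimax quotient; rationalizing, $B^{\mathbb{Q}}$ has a simple $\mathbb{Q}G$-quotient $V$ that is ``$\pi$-minimax type'' — more precisely, there is a rationally irreducible $\mathbb{Z}G$-module whose additive group is $\pi$-minimax and whose rationalization is $V$ (realized by a suitable integral lattice in the quotient). Since $B^{\mathbb{Q}}$ is a submodule of the semisimple-by-filtered module $A^{\mathbb{Q}}$, the simple quotient $V$ of $B^{\mathbb{Q}}$ is isomorphic to one of the rationally irreducible composition factors $V_i$ of $A^{\mathbb{Q}}$ (here one uses that for a submodule of a module with a finite filtration by irreducibles, every simple quotient of the submodule appears among the filtration factors of the big module, over the commutative ring $\mathbb{Q}G$). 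Pulling back along the filtration of $A$, I obtain a rationally irreducible $\mathbb{Z}G$-module section $A_i/A_{i-1}$ of $A$ with $(A_i/A_{i-1})^{\mathbb{Q}} \cong V_i \cong V$. Now I claim such a section can be chosen with $\pi$-minimax additive group: since $V$ supports an integral $\pi$-minimax lattice and all $\mathbb{Z}G$-lattices in a given simple $\mathbb{Q}G$-module have the same spectrum (the spectrum being determined by which primes are inverted in the relevant order, an invariant of $V$ as argued via the finite-extension-of-$\mathbb{Q}$ field in the proof of Lemma~3.2), the section $A_i/A_{i-1}$ is itself $\pi$-minimax. Finally, $A_i/A_{i-1}$ being a nontorsion $\pi$-minimax \emph{quotient} of the submodule $A_i$ of $A$ — no, I actually need it as a quotient of $A$: take $A/A_{i-1}$, which surjects onto $A/A_{i} $; instead work with $A_r/A_{i-1} = A/A_{i-1}$ and note its submodule lattice — one shows $A/A_{i-1}$ has $A_i/A_{i-1}$ as a submodule, and then that $A$ has a nontorsion $\pi$-minimax quotient by observing that the composition-series structure lets us split off $V_i$ rationally; concretely, $\mathrm{Ext}^1$ considerations (Lemma~2.2 applied in reverse is not available, so instead) one uses that $A^{\mathbb Q}$, having $V$ among its factors and being finite-dimensional, admits $V$ as a quotient after finitely many steps, contradicting $A\in\mathfrak X_\pi$. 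Carefully tracking this rationalization-and-lattice argument, so that ``section of a submodule'' genuinely yields ``nontorsion $\pi$-minimax quotient of $A$,'' is the crux; everything else is routine reduction via Lemma~5.1.
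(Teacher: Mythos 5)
Your route --- rationalize, locate a simple $\mathbb{Q}G$-module $V$ of ``$\pi$-minimax type'' among the composition factors of $A^{\mathbb{Q}}$, and convert that into a nontorsion $\pi$-minimax quotient of $A$ --- is workable, but the step you yourself label the crux is precisely the step you have not supplied, so as written the proof is incomplete. The missing implication is: \emph{$V$ occurs as a composition factor of $A^{\mathbb{Q}}$, therefore $A$ has a nontorsion $\pi$-minimax quotient}. It is true, and closable with tools already in the paper. Over the commutative ring $\mathbb{Q}G$ an extension of non-isomorphic simple modules splits (their annihilators are distinct maximal ideals, whose sum is the whole ring and kills ${\rm Ext}^1$ --- the same mechanism as Lemma 2.2), so a finite-length $\mathbb{Q}G$-module is the direct sum of its primary components and, by Nakayama, every composition factor genuinely occurs as a quotient. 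Hence there is a surjection $\phi:A^{\mathbb{Q}}\to V$. Now $\phi(A)\cong A/(A\cap\ker\phi)$ is a quotient of $A$, nonzero (it spans $V$) hence nontorsion, and finitely generated over $\mathbb{Z}G$ because $A$ is Noetherian as a $G$-module --- note that this, not Noetherianity of $\mathbb{Z}G$, is the hypothesis to invoke, since $G$ is only assumed abelian, not finitely generated. Clearing the denominators of finitely many generators embeds $\phi(A)$, up to isomorphism, into the $\pi$-minimax lattice $S\subset V$, so $\phi(A)$ is a nontorsion $\pi$-minimax quotient of $A$, contradicting $A\in\mathfrak{X}_{\pi}$. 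With that paragraph inserted your argument closes; the preliminary reductions are fine, though more elaborate than necessary, since every torsion group lies in $\mathfrak{X}_{\pi}$ and $\tau(A)$ is even finite by Lemma 5.2, so the whole torsion radical can be killed in one step.

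For comparison, the paper avoids the ``composition factor implies quotient'' fact by inducting on $h(A)$. It first shows every rationally irreducible submodule $B$ lies in $\mathfrak{X}_{\pi}$: if not, the inductive hypothesis applied to $A/B$, together with the observation that $B$ then has no infinite $\mathfrak{X}_{\pi}$-sections, lets Lemma 2.2 split $B^{\mathbb{Q}}$ off $A^{\mathbb{Q}}$ as a direct summand; the resulting surjection $A^{\mathbb{Q}}\to B^{\mathbb{Q}}$ yields, by the same lattice argument as above, an infinite $\mathfrak{X}_{\pi}$-section of $B$, a contradiction. A general submodule is then handled by sandwiching a rationally irreducible submodule inside it and using extension-closure of $\mathfrak{X}_{\pi}$. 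Your approach, once completed, is somewhat more direct and makes the commutative-algebra input explicit; the paper's derives the needed splitting from Lemma 2.2 and lets the induction absorb the bookkeeping.
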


\begin{proof} We argue by induction on $h(A)$. For $h(A)=0$, the conclusion is trivially true. Assume that $h(A)\geq 1$. 
First we show that every rationally irreducible submodule of $A$ lies in $\mathfrak{X}_{\pi}$. Suppose that there is a rationally irreducible submodule $B$ outside the class $\mathfrak{X}_{\pi}$.  Set $\bar{A}=A/B$.  By Lemma 4.2, the additive groups of both $\bar{A}$ and $B$ are virtually torsion-free. Also, in view of the inductive hypothesis, every $\mathbb ZG$-submodule of $\bar{A}$ belongs to $\mathfrak{X}_{\pi}$. On the other hand, $B$ has no infinite $\mathbb ZG$-submodules that lie in $\mathfrak{X}_{\pi}$. Therefore, by Proposition A, ${\rm Ext}_{\mathbb ZG}^1(\bar{A}, B)$ is bounded. According to Proposition 2.1, this means that $A$ has a submodule $A_0$ of finite index such that $B/\tau(B)$ is a homomorphic image of $A_0$.  The closure properties of $\mathfrak{X}_{\pi}$ imply, then, that $B$ is a member of $\mathfrak{X}_{\pi}$, yielding a contradiction. Therefore, every rationally irreducible submodule of $A$ belongs to $\mathfrak{X}_{\pi}$. 

To complete the proof, we let $B$ be an arbitrary $\mathbb ZG$-submodule of $A$. If $B$ is finite, then $B$ is immediately seen to belong to $\mathfrak{X}_\pi$; hence we assume that  $B$ is infinite. Let $C$ be a rationally irreducible submodule of $B$. We have that $C$ belongs to $\mathfrak{X}_{\pi}$ by what was proved above. Moreover,  $B/C$ lies in 
$\mathfrak{X}_{\pi}$ by virtue of the inductive hypothesis. Thus $B$ is a member of $\mathfrak{X}_{\pi}$.

\end{proof}

We wish to discern inheritance properties similar to Lemma 4.3 for normal subgroups of solvable groups of finite rank.  In order to accomplish this, we require the notion of a nilpotent action. 

\begin{definition} {\rm Assume that $G$ is a group and $N$ a $G$-operator group. We
define the {\it lower central
$G$-series}
\[ \dots < \gamma^G_3N <  \gamma^G_2N <
\gamma^G_1N\] of $N$ as follows: $\gamma^G_1N=N$;
$\gamma^G_iN=\langle a(g\cdot b)a^{-1}b^{-1}\ | \ a\in N, b\in
\gamma^G_{i-1}N, g\in G\rangle$ for
$i>1$.

We say that the action
of
$G$ on
$N$ is {\it nilpotent} if
there is a nonnegative integer
$c$ such that $\gamma^G_{c+1}N=1$. The smallest
such integer $c$ is called the {\it nilpotency class} of the action.}
\end{definition}

In studying nilpotent actions, the following well-known property is exceedingly useful. 

\begin{proposition} Assume that $G$ is a group and $N$ a $G$-operator group. Then, for each $i\geq 1$, there is an epimorphism
\begin{equation*} \theta_i: \underbrace{G_{\rm ab}\otimes \dots \otimes G_{\rm ab}}_{i-1}\otimes (N/\gamma^G_2N)\rightarrow \gamma^G_iN/\gamma^G_{i+1}N.\end{equation*}
\end{proposition}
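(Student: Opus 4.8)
The plan is to build the epimorphism $\theta_i$ inductively on $i$, exploiting the fact that the relevant quotients carry a natural abelian group structure on which $G_{\mathrm{ab}}$ acts trivially. For $i=1$ there is nothing to prove (both sides are $N/\gamma^G_2N$). For the inductive step, first I would verify that $\gamma^G_iN \lhd N$ and that the action of $G$ on $\gamma^G_iN/\gamma^G_{i+1}N$ is trivial, so that this quotient is a $\mathbb{Z}[G_{\mathrm{ab}}]$-module; indeed, by the very definition of $\gamma^G_{i+1}N$ the commutators $a(g\cdot b)a^{-1}b^{-1}$ with $b\in\gamma^G_iN$ lie in $\gamma^G_{i+1}N$, which shows both that $\gamma^G_iN/\gamma^G_{i+1}N$ is central in $N/\gamma^G_{i+1}N$ and that $G$ acts trivially on it. In particular it is an abelian group.

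Next I would define a map out of the $(i-1)$-fold tensor product by the obvious rule: send $\bar g_1\otimes\cdots\otimes\bar g_{i-2}\otimes \bar a\otimes \bar b \mapsto$ the class of the iterated ``commutator'' $[g_1,[g_2,\dots,[g_{i-2},[a,b]]\dots]]$ appropriately interpreted using the action (for $i=2$ this is just $\bar b\mapsto \bar b$ under $N/\gamma^G_2N\to N/\gamma^G_2N$). More cleanly, I would set up the induction so that I already have $\theta_{i-1}\colon (G_{\mathrm{ab}})^{\otimes(i-2)}\otimes(N/\gamma^G_2N)\twoheadrightarrow \gamma^G_{i-1}N/\gamma^G_iN$, and then I consider the commutator map $N\times\gamma^G_{i-1}N\to\gamma^G_iN$, $(a,b)\mapsto[a,b]=a b a^{-1} b^{-1}$, together with the ``twisted commutator'' $(g,b)\mapsto (g\cdot b)b^{-1}$. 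The key computation is that modulo $\gamma^G_{i+1}N$ these maps are biadditive in each variable and kill $\gamma^G_2N$ in the first variable and $\gamma^G_iN$ in the $b$ variable, using the standard commutator identities $[ab,c]\equiv[a,c][b,c]$ and $[a,bc]\equiv[a,b][a,c]$ valid modulo the next term of the series, plus the analogous identities for the $G$-action. This yields a well-defined bilinear map $(N/\gamma^G_2N)\times(\gamma^G_{i-1}N/\gamma^G_iN)\to \gamma^G_iN/\gamma^G_{i+1}N$ and a bilinear map $G_{\mathrm{ab}}\times(\gamma^G_{i-1}N/\gamma^G_iN)\to\gamma^G_iN/\gamma^G_{i+1}N$, which combine to a map from $(G_{\mathrm{ab}}\oplus N/\gamma^G_2N)\otimes(\gamma^G_{i-1}N/\gamma^G_iN)$; restricting and precomposing with $\mathrm{id}\otimes\theta_{i-1}$ gives $\theta_i$. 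Surjectivity is immediate because $\gamma^G_iN/\gamma^G_{i+1}N$ is by definition generated by the images of the elements $a(g\cdot b)a^{-1}b^{-1}$ with $b\in\gamma^G_{i-1}N$, and every such generator is visibly hit.

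The main obstacle is the bookkeeping in verifying that the commutator and twisted-commutator maps are genuinely biadditive \emph{modulo $\gamma^G_{i+1}N$} and factor through the claimed tensor factors — in particular, that additivity in the $b$-variable combined with the inductive surjection $\theta_{i-1}$ is compatible, and that one may replace $\gamma^G_{i-1}N/\gamma^G_iN$ by the tensor expression without losing well-definedness (one must check the relations defining the tensor product are respected). This is entirely routine commutator calculus — the Hall--Witt type identities and the relation $g\cdot(ab)=(g\cdot a)(g\cdot b)$ — but it is the only place where care is needed; I would present it as a sequence of congruences modulo $\gamma^G_{i+1}N$ and leave the most mechanical verifications to the reader, as the paper does for several similar lemmas. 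No deep input is required beyond these identities and the inductive hypothesis.
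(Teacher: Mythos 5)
Your construction of the two bilinear maps is sound, but the final step --- restricting the first tensor factor from $G_{\rm ab}\oplus(N/\gamma^G_2N)$ to $G_{\rm ab}$ and then asserting that surjectivity is ``immediate'' --- is exactly where the argument fails. Modulo $\gamma^G_{i+1}N$ a generator decomposes as $a(g\cdot b)a^{-1}b^{-1}=[a,g\cdot b]\cdot\bigl((g\cdot b)b^{-1}\bigr)$ with $g\cdot b\in\gamma^G_{i-1}N$: the twisted part $(g\cdot b)b^{-1}$ is hit by your $G_{\rm ab}$-summand, but the ordinary commutator $[a,g\cdot b]$ (already present when $g=1$) is hit only by the $N/\gamma^G_2N$-summand that you discarded. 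Your unrestricted map does yield an epimorphism from $\bigl(G_{\rm ab}\oplus(N/\gamma^G_2N)\bigr)^{\otimes(i-1)}\otimes(N/\gamma^G_2N)$ --- equivalently $E_{\rm ab}^{\otimes(i-1)}\otimes(N/[E,N])$ for $E=N\rtimes G$, which is the standard fact --- but not from the smaller module in the statement. (The paper gives no proof, citing the result as well known, so there is no argument to compare against.)

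The gap cannot be closed by more careful commutator calculus, because the statement as literally printed is false for a general operator group: take $G=1$ and $N$ free of rank $2$; then $\gamma^G_iN=\gamma_iN$, the proposed source is $0$ for $i\geq 2$, yet $\gamma_2N/\gamma_3N\cong\mathbb Z$. What is true, and what the paper actually uses (Lemma 5.7 applies the proposition to $P$ acting by conjugation on a normal subgroup $N\leq P$), is the version in which inner automorphisms of $N$ are induced by the $G$-action: then $[a,c]=(a\cdot c)c^{-1}$ is itself a twisted commutator, so $\gamma^G_iN=[G,\gamma^G_{i-1}N]$, the $N/\gamma^G_2N$-summand is absorbed into $G_{\rm ab}$, and your restricted map becomes surjective. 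For an arbitrary operator group one should replace the outer factors $G_{\rm ab}$ by $G_{\rm ab}\oplus(N/\gamma^G_2N)$ --- which your construction does prove, and which still feeds into Lemma 5.5 via Lemma 5.6 whenever $N$ has finite torsion-free rank. You should either add the hypothesis that inner automorphisms of $N$ are realized by $G$, or prove the weaker (correct) form; as written, the surjectivity claim is the missing --- and unfillable --- step.
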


The above epimorphism can be employed to prove the lemma below.

\begin{lemma} Let $G$ be a group such that $G_{\rm ab}$ has finite torsion-free rank. Assume that $N$ is a $G$-operator group upon which $G$ acts nilpotently. 
If $N$ belongs to $\mathfrak{X}_{\pi}$, then $\gamma^G_i(N)$ is in $\mathfrak{X}_{\pi}$ for all $i\geq 1$. 
\end{lemma}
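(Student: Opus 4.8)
The plan is to induct on the nilpotency class $c$ of the action of $G$ on $N$. When $c\le 1$ the action is trivial on $N/\gamma_2^G N = N$, so $\gamma_2^G N = 1$ and there is nothing to prove beyond $\gamma_1^G N = N \in \mathfrak{X}_\pi$. For the inductive step, suppose the action has class $c \ge 2$. The key point is that the quotient module $N/\gamma_2^G N$ inherits membership in $\mathfrak{X}_\pi$ (Lemma 5.1(ii), since $\mathfrak{X}_\pi$ is closed under quotients), and then Proposition 5.5 furnishes, for each $i \ge 2$, an epimorphism
$$\theta_i\colon \underbrace{G_{\rm ab}\otimes\cdots\otimes G_{\rm ab}}_{i-1}\otimes (N/\gamma_2^G N)\longrightarrow \gamma_i^G N/\gamma_{i+1}^G N.$$
The idea is to argue that the source of $\theta_i$ lies in $\mathfrak{X}_\pi$, hence so does the section $\gamma_i^G N/\gamma_{i+1}^G N$; then, since $\gamma_i^G N$ is built from finitely many such sections by extensions (the action being nilpotent of class $c$, so $\gamma_{c+1}^G N = 1$), closure of $\mathfrak{X}_\pi$ under extensions (Lemma 5.1(ii)) gives $\gamma_i^G N \in \mathfrak{X}_\pi$ for every $i \ge 1$.

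The crux is therefore to show that a tensor product $G_{\rm ab}^{\otimes(i-1)}\otimes M$ belongs to $\mathfrak{X}_\pi$ whenever $M \in \mathfrak{X}_\pi$ and $G_{\rm ab}$ has finite torsion-free rank. Since $G_{\rm ab}$ has finite torsion-free rank, it has a finitely generated subgroup $H$ of finite index, and an induction on the number of cyclic factors of $H$ reduces the claim to the single tensor factor case: it suffices to show that $C \otimes M \in \mathfrak{X}_\pi$ for $C$ a (possibly infinite) cyclic group and, more generally, to control $G_{\rm ab}\otimes M$. For $C = \mathbb{Z}$ this is immediate since $\mathbb{Z}\otimes M \cong M$; for $C$ finite, $C\otimes M$ is a quotient of $M$ (it is $M/|C|M$ up to the relevant identification, at any rate a homomorphic image of $M^{\oplus|C|}$ composed with quotients), so lies in $\mathfrak{X}_\pi$; and the general finitely generated abelian group is handled by the extension-closure of $\mathfrak{X}_\pi$ applied to the tensor product of a short exact sequence of abelian groups with $M$ (tensoring a split-into-cyclics filtration of $G_{\rm ab}$ against $M$ yields a filtration of $G_{\rm ab}\otimes M$ with cyclic-tensor factors). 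Finally, to pass from $G_{\rm ab}$ to its finite-index subgroup $H$: the inclusion $H \hookrightarrow G_{\rm ab}$ need not behave well under $\otimes M$, so instead one observes that $G_{\rm ab}/H$ is finite, tensors the exact sequence $H \to G_{\rm ab} \to G_{\rm ab}/H \to 0$ with $M$, and uses that $(G_{\rm ab}/H)\otimes M$ is a quotient of $M^{\oplus n}$ (hence in $\mathfrak{X}_\pi$) together with $H\otimes M \in \mathfrak{X}_\pi$ and extension-closure, bearing in mind one only needs the image, which is a quotient.

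The main obstacle I anticipate is precisely this last manipulation: tensoring is only right exact, so a finite-index subgroup of $G_{\rm ab}$ does not map injectively into $G_{\rm ab}$ after tensoring, and torsion in $G_{\rm ab}$ can interact badly with $M$. The cleanest route around it is to avoid subgroups of $G_{\rm ab}$ altogether and instead filter $G_{\rm ab}$ directly: every finitely generated abelian group, and more generally every abelian group of finite torsion-free rank, has an ascending series with cyclic (or at worst, for the quasicyclic-free part, explicitly cyclic) factors, and tensoring that series against $M$ and using that a cyclic group tensored with an $\mathfrak{X}_\pi$-group lands in $\mathfrak{X}_\pi$ (either $M$ itself or a quotient of a finite direct sum of copies of $M$), together with the extension- and quotient-closure from Lemma 5.1, finishes the argument without ever needing exactness of $\otimes$ on the left. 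One should double-check that "finite torsion-free rank" suffices here — it does, because the only use is to guarantee a series with handleable factors and finitely many tensor factors in $\theta_i$, and the iterated tensor $G_{\rm ab}^{\otimes(i-1)}$ is then dealt with by applying the single-factor result $i-1$ times.
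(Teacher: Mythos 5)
Your overall reduction is exactly the paper's: Proposition 5.4 gives the epimorphism onto $\gamma_i^GN/\gamma_{i+1}^GN$, the domain is an iterated tensor product of $G_{\rm ab}$ with the $\mathfrak{X}_{\pi}$-quotient $N/\gamma_2^GN$, and extension-closure of $\mathfrak{X}_{\pi}$ along the finite series $\gamma_i^GN>\gamma_{i+1}^GN>\cdots>\gamma_{c+1}^GN=1$ finishes the job. The problem lies in how you justify the single-tensor-factor step, i.e.\ that $M\otimes B\in\mathfrak{X}_{\pi}$ for $M\in\mathfrak{X}_{\pi}$ and $B=G_{\rm ab}$ of finite torsion-free rank. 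Your first route starts from the assertion that a group of finite torsion-free rank has a finitely generated subgroup of finite index; this is false ($\mathbb{Q}$, $\mathbb{Z}[1/p]$, and any infinite torsion group are counterexamples), and since $G_{\rm ab}$ is only assumed to have finite torsion-free rank, these cases genuinely occur. Your fallback --- filter $G_{\rm ab}$ by cyclic factors and tensor the filtration against $M$ --- is also not complete as stated: for such groups the cyclic filtration is necessarily infinite (indeed possibly transfinite), and closure of $\mathfrak{X}_{\pi}$ under extensions only propagates along series of finite length. The argument can be repaired by invoking Lemma 5.1(iii) (closure under joins of normal subgroups) at the limit stages, since the images of the $B_\alpha\otimes M$ in $B\otimes M$ form an ascending chain of (normal, as everything is abelian) subgroups with union $B\otimes M$; but you do not invoke this, and as written the limit step is a gap.

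The paper's Lemma 5.6 avoids all of this with a two-step finite filtration: choose a free abelian subgroup $C\cong\mathbb{Z}^{h(B)}$ of $B$ with $B/C$ torsion (this is exactly what finite torsion-free rank buys you). Then $M\otimes C\cong M^{\oplus h(B)}$ lies in $\mathfrak{X}_{\pi}$ by finitely many applications of extension-closure, while $M\otimes(B/C)$ is a torsion group and hence lies in $\mathfrak{X}_{\pi}$ automatically --- this last observation, that every torsion group is an $\mathfrak{X}_{\pi}$-group, is the key simplification your argument never exploits. Right-exactness of the tensor product then exhibits $M\otimes B$ as an extension of a quotient of $M^{\oplus h(B)}$ by $M\otimes(B/C)$, and quotient- and extension-closure conclude. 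I recommend you replace your tensor step with this argument; the rest of your proof then goes through as the paper intends.
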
 

\begin{proof} The conclusion follows immediately from Proposition 4.4 and Lemma 4.6 below.
\end{proof}

\begin{lemma}
Let $A$ and $B$ be abelian groups. If $A$ belongs to $\mathfrak{X}_{\pi}$ and $B$ has finite torsion-free rank, then $A\otimes B$ is a member of  $\mathfrak{X}_{\pi}$.
\end{lemma}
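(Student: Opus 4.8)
The plan is to reduce to the case where $A$ is torsion-free and then to rank one, exploiting the fact that $\mathfrak{X}_{\pi}$ is closed under extensions and quotients (Lemma 5.1(ii)) together with the structure of tensor products with finite-rank abelian groups.

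\medskip

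\noindent\textbf{Step 1: Reduce $B$ to a free abelian group of finite rank.} Since $B$ has finite torsion-free rank, it has a free abelian subgroup $B_0$ of finite rank such that $B/B_0$ is torsion. The exact sequence $0\to B_0\to B\to B/B_0\to 0$ yields, upon tensoring with $A$, an exact sequence $A\otimes B_0\to A\otimes B\to A\otimes(B/B_0)\to 0$. The rightmost term $A\otimes(B/B_0)$ is a quotient of $A$ (more precisely a direct sum of quotients of $A$, since $B/B_0$ is a direct limit of finite direct sums of cyclic groups and each $A\otimes C_m$ is a quotient of $A$), hence lies in $\mathfrak{X}_{\pi}$ by Lemma 5.1(ii). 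By closure under extensions, it therefore suffices to show $A\otimes B_0\in\mathfrak{X}_{\pi}$; and since $B_0$ is free of finite rank, $A\otimes B_0$ is just a finite direct sum of copies of $A$, which lies in $\mathfrak{X}_{\pi}$ by Lemma 5.1(iii) (a finite direct sum is the join of finitely many normal $\mathfrak{X}_{\pi}$-subgroups). Thus the free-abelian case is immediate, and the only real content is in handling the torsion part $B/B_0$—but wait, that was already dispatched as a quotient of $A$. So in fact the lemma follows directly once we observe that tensoring a member of $\mathfrak{X}_{\pi}$ with a cyclic group gives a quotient of a finite direct sum of copies of it.

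\medskip

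\noindent\textbf{Step 2: Assemble the argument.} Write $B$ as an extension $0\to B_0\to B\to \bar B\to 0$ with $B_0$ free abelian of finite rank $r$ and $\bar B$ torsion. Right-exactness of $-\otimes B$ gives the exact sequence
\[
A^{r}\cong A\otimes B_0\;\longrightarrow\; A\otimes B\;\longrightarrow\; A\otimes \bar B\;\longrightarrow\;0.
\]
Now $A^r\in\mathfrak{X}_{\pi}$ by Lemma 5.1(ii)(iii). For $A\otimes\bar B$: writing $\bar B=\varinjlim \bar B_\lambda$ as a direct limit of finitely generated (hence finite, being torsion of finite rank---or more safely, finitely generated torsion) subgroups, each $A\otimes \bar B_\lambda$ is a finite direct sum of groups of the form $A\otimes(\mathbb Z/m)=A/mA$, a quotient of $A$; so each $A\otimes\bar B_\lambda\in\mathfrak{X}_{\pi}$ by Lemma 5.1(ii)(iii). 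Since $\otimes$ commutes with direct limits and $\mathfrak{X}_{\pi}$ is closed under the join of a directed system of images (again Lemma 5.1(ii)(iii), applied to the directed union of the images of the $A\otimes\bar B_\lambda$ in $A\otimes\bar B$), we get $A\otimes\bar B\in\mathfrak{X}_{\pi}$. Finally, $A\otimes B$ is an extension of a quotient of $A^r$ by $A\otimes\bar B$, hence lies in $\mathfrak{X}_{\pi}$ by closure under extensions and quotients.

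\medskip

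\noindent The only point requiring a little care---and the place I would expect the referee to look hardest---is the passage to the direct limit: one must check that $\mathfrak{X}_{\pi}$, a radical class closed under quotients and under joins of normal subgroups, is closed under directed unions of subgroups that are themselves in $\mathfrak{X}_{\pi}$, which is exactly Lemma 5.1(iii) since a directed union of normal subgroups is their join. Everything else is a routine manipulation of the right-exact functor $-\otimes B$ and the closure properties recorded in Lemma 5.1.
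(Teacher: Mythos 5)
Your proof is correct and follows essentially the same route as the paper: decompose $B$ as an extension of a finite-rank free abelian group $B_0$ by a torsion quotient $\bar B$, tensor with $A$, and use right-exactness together with the closure properties of $\mathfrak{X}_{\pi}$. The only difference is that your direct-limit/join argument for $A\otimes\bar B$ is unnecessary work: $A\otimes\bar B$ is a torsion group (since $\bar B$ is torsion), and every torsion group lies in $\mathfrak{X}_{\pi}$ by definition, which is how the paper dispatches that term in one line.
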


\begin{proof}
Choose a free abelian subgroup $C$ of $B$ such that $B/C$ is torsion. Then $C\cong \mathbb Z^{n}$, where $n=h(B)$, and so $A\otimes C\cong \underbrace{A\oplus\dots\oplus A}_{n}$. The class $\mathfrak{X}_{\pi}$  is closed under extensions and quotients; hence every quotient of $A\otimes C$ is in $\mathfrak{X}_{\pi}$ . Also, as a torsion group, $A\otimes (B/C)$ belongs to  $\mathfrak{X}_{\pi}$ . It follows, then, from the exact sequence 
$A\otimes C\to A\otimes B\to A\otimes (B/C)\to 0$
that $A\otimes B$ lies in $\mathfrak{X}_{\pi}$. 
\end{proof}

Lemma 4.5 allows us to establish the following two inheritance properties for solvable $\mathfrak{X}_{\pi}$-groups of finite rank. The second of these, Lemma 4.8, will play an essential role in the proof of Theorem C.  

\begin{lemma} Let $M$, $N$, and $P$ be nilpotent normal subgroups of a group $G$ such that
$M<N<P$ and the following three conditions hold.
\vspace{5pt}
 
 (i) The quotient $G/P$ is abelian.
 
 (ii) The group $P$ has finite rank. 
 
 (iii) The subgroup $N$ is Noetherian as a $G$-operator group. 
\vspace{5pt}

\noindent If $N$ belongs to $\mathfrak{X}_{\pi}$, then $M$ does too. 
\end{lemma}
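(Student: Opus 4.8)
The plan is to reduce to a single section $N/M'$-type situation and then invoke Lemma~5.3. First I would observe that, since $G$ acts nilpotently on $N$ (being a nilpotent normal subgroup of $G$, the conjugation action of $G$ on $N$ is nilpotent), Lemma~5.5 applies: because $P_{\rm ab}$ has finite torsion-free rank and $N\leq P$, the group $G_{\rm ab}$ need not have finite rank directly, so the cleaner route is to replace $G$ by $P$ for the purpose of the $G$-series, or better, to work with the lower central $G$-series of $N$ itself. Concretely, set $N_i=\gamma^G_iN$; by Lemma~5.5 each $N_i$ lies in $\mathfrak{X}_\pi$, and this series terminates since $N$ is nilpotent and normal (so $G$ acts nilpotently on it). The successive quotients $N_i/N_{i+1}$ are abelian, central in $N$ and normal in $G$, hence $G$-modules, and they are Noetherian as $G$-operator groups (being sections of the Noetherian $G$-group $N$) and minimax qua abelian groups.

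The key reduction is to the abelian case. I would induct on the nilpotency class of $N$. If $N$ is abelian, then $N$ is a Noetherian $G$-module, minimax as an abelian group, lying in $\mathfrak{X}_\pi$; since $M\leq N$ is a $G$-invariant subgroup (here I need $M\lhd G$, which is part of the hypothesis — $M$ is a normal subgroup of $G$), it is a $G$-submodule, and Lemma~5.3 — applied after passing to the finitely generated abelian quotient of $G$ acting on $N$, or noting that the $G$-action factors through an abelian quotient in the relevant sense; in fact Lemma~5.3 requires $G$ abelian, so I must first observe that $C_G(N)$ has the appropriate index or pass to $G/C_G(N)$, which embeds in $\mathrm{Aut}(N)$ and, being a quotient image acting on a minimax module, is itself abelian-by-finite or at least we only need the module structure — more carefully, $M$ is a $\mathbb ZG$-submodule of $A=N$, and $\mathrm{Ext}$/section arguments in Lemma~5.3 only used the commutative ring $\mathbb ZG$ when $G$ is abelian, so I reduce to $G$ abelian by replacing $G$ with $G/C_G(N)$ after checking $G/C_G(N)$ is finitely generated abelian; hmm, here is where condition (i) that $G/P$ is abelian and $N\leq P$ nilpotent enters to control the structure of $G/C_G(N)$ — gives us directly that Lemma~5.3 applies to conclude $M\in\mathfrak{X}_\pi$.

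For the inductive step with nilpotency class $c\geq 2$: let $Z=Z(N)\cap N$, or more precisely take the last nontrivial term $C=\gamma^G_cN$ of the lower central $G$-series, which is central in $N$, normal in $G$, abelian, Noetherian as a $G$-group, and lies in $\mathfrak{X}_\pi$ by Lemma~5.5. Consider $\bar G=G/C$ acting on $\bar N=N/C$ and $\bar M=MC/C$. The group $\bar N$ is nilpotent of smaller class, still normal in $\bar G$, still Noetherian as a $\bar G$-group, still in $\mathfrak{X}_\pi$ (quotients of $\mathfrak{X}_\pi$-groups are in $\mathfrak{X}_\pi$ by Lemma~5.1(ii)), and the hypotheses (i)--(iii) are inherited by $\bar G, \bar M<\bar N<\bar P=P/C$. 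The inductive hypothesis gives $\bar M\in\mathfrak{X}_\pi$, i.e.\ $MC/C\in\mathfrak{X}_\pi$. Now $M\cap C$ is a $G$-submodule of the abelian Noetherian $\mathfrak{X}_\pi$-module $C$, so $M\cap C\in\mathfrak{X}_\pi$ by the abelian case (Lemma~5.3). Since $M$ is an extension of $MC/C\cong M/(M\cap C)$ by $M\cap C$, both in $\mathfrak{X}_\pi$, and $\mathfrak{X}_\pi$ is extension-closed (Lemma~5.1(ii)), we conclude $M\in\mathfrak{X}_\pi$, completing the induction.

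\textbf{Main obstacle.} The delicate point I expect to be the crux is the interface with Lemma~5.3, which is stated for \emph{abelian} $G$: one must arrange that the relevant module actions factor through a finitely generated abelian group, and this is precisely where hypotheses (i) and (ii) — $G/P$ abelian and $P_{\rm ab}$ of finite torsion-free rank, combined with $M<N<P$ all nilpotent — must be leveraged to control $G/C_G(\text{section})$. A secondary subtlety is verifying that the Noetherian-as-$G$-operator-group condition genuinely descends to the abelian sections $N_i/N_{i+1}$ and to submodules like $M\cap C$ (it does, since $G$-operator-Noetherianity passes to $G$-invariant subgroups and $G$-quotients); and that condition (iii) is correctly placed on $N$ rather than $P$ — we only ever need Noetherianity of things sitting inside $N$, so this is fine. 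The rest is routine bookkeeping with the closure properties in Lemma~5.1.
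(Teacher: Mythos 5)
Your proposal has the right skeleton --- induct on a nilpotency class, peel off the last nontrivial term $C$ of a lower central series, handle $M\cap C$ via Lemma 5.3 and $M/(M\cap C)$ via the inductive hypothesis, then glue with extension-closure of $\mathfrak{X}_\pi$ --- but it is built on the wrong series, and this is not cosmetic. You work with $\gamma^G_iN$ and assert that, being a nilpotent normal subgroup, $N$ is acted on nilpotently by $G$. That is false in general: the conjugation action of $G$ on $N$ is nilpotent precisely when $N$ lies in some term of the upper central series of $G$, which a nilpotent normal subgroup need not do. For instance, in $G=\mathbb Z[1/3]\rtimes\langle t\rangle$ with $t$ acting by multiplication by $3$ and $N=\mathbb Z[1/3]$, one has $\gamma^G_iN=2^{i-1}\mathbb Z[1/3]\neq 1$ for all $i$, so the series never terminates and your induction does not get started. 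Two further symptoms of the same choice: Lemma 5.5 requires $G_{\rm ab}$ to have finite torsion-free rank, which is not among the hypotheses (you flag this yourself --- ``the cleaner route is to replace $G$ by $P$'' --- but then proceed with $\gamma^G_iN$ anyway); and your base case ``$N$ abelian'' does not make Lemma 5.3 applicable, because when $N$ is merely abelian the action of $G$ on $N$ factors through $G/C_G(N)$, which need not be abelian since $P$ may act nontrivially on $N$. Your hand-waving at $G/C_G(N)$ in the second paragraph does not close this.

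All three problems are repaired simultaneously by the choice the paper makes: induct on the nilpotency class of the action of $P$ on $N$ and use the series $\gamma^P_iN$. Since $N<P$ with $P$ nilpotent, $\gamma^P_iN\leq\gamma_iP$, so this action really is nilpotent; hypothesis (ii) supplies exactly the finite torsion-free rank of $P_{\rm ab}$ that Lemma 5.5 needs; and in the base case, where $P$ centralizes $N$ (and in the inductive step, where $P$ centralizes $A=\gamma^P_cN$), the $G$-action factors through the abelian group $G/P$ by hypothesis (i), so Lemma 5.3 applies to $N$, respectively to $M\cap A$ inside $A$. With that substitution your argument goes through essentially verbatim and coincides with the paper's proof.
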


\begin{proof} 
Our proof is by induction on the nilpotency class of the action of $P$ on $N$. If $P$ acts trivially, then we can deduce the conclusion from Lemma 4.3, regarding $N$ and $M$ as $G/P$-modules. Assume that the nilpotency class $c$ of the action exceeds one, and let $A=\gamma_c^P(N)$.  
Then $A$ is a $\mathfrak{X}_{\pi}$-group by Lemma 4.5. 
Consider the chain
\begin{equation*} M/M\cap A<N/A<P/A,\end{equation*}
the second term of which is in $\mathfrak{X}_{\pi}$. The action of $P/A$ on $N/A$ is nilpotent of class $c-1$. Consequently, the inductive hypothesis yields that $M/M\cap A$  is in $\mathfrak{X}_{\pi}$.  
Next we look at the groups 
$M\cap A$ and $A.$ 
Treating these groups as $G/P$-modules, it follows from Lemma 4.3 that $M\cap A$ belongs to $\mathfrak{X}_{\pi}$. Therefore, $M$ is in $\mathfrak{X}_{\pi}$, as desired.

\end{proof}

\begin{lemma} Let $G$ be a solvable group of finite rank such that $\tau(G)$ is a \v{C}ernikov group. Suppose that $M$ and $N$ are normal subgroups of $G$ such that $M < N$ and $N$ is Noetherian as a $G$-operator group.  If $N$ is in $\mathfrak{X}_{\pi}$, then so is $M$. 
\end{lemma}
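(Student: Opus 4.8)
The goal is to deduce Lemma 5.9 from Lemma 5.8, so the main task is to manufacture the hierarchy $M < N < P$ of nilpotent normal subgroups with $G/P$ abelian and $P_{\mathrm{ab}}$ of finite torsion-free rank. The obvious obstacle is that neither $M$ nor $N$ need be nilpotent, so Lemma 5.8 does not apply directly; we have to reduce to the nilpotent case by peeling off the derived series. The plan is to induct on the derived length of $N$. If $N$ is abelian the conclusion follows at once from Lemma 5.3 (viewing $M$ and $N$ as modules over the virtually abelian quotient $G/F$, where $F = \mathrm{Fitt}(G)$, after passing to a subgroup of finite index so that the quotient is abelian — this is legitimate since $\mathfrak{X}_\pi$ is closed under passing to finite-index subgroups and to finite-index overgroups behaves correctly via Lemma 5.1(i) and (ii)). Actually, since $N$ is Noetherian as a $G$-operator group and solvable minimax, by Lemma 5.2 its torsion radical is finite, so the truly delicate part concerns the torsion-free behaviour.

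For the inductive step, suppose $N$ has derived length $d \geq 2$. The key move is to apply the inductive hypothesis to the pair $M' < N'$ (commutator subgroups), which are again normal in $G$, are Noetherian as $G$-operator groups (submodules/subgroups of Noetherian operator groups are Noetherian), and $N'$ has derived length $d-1$; since $N \in \mathfrak{X}_\pi$ and $\mathfrak{X}_\pi$ is subgroup-closed for \emph{normal} subgroups... no — $\mathfrak{X}_\pi$ is \emph{not} subgroup-closed, so I cannot directly assert $N' \in \mathfrak{X}_\pi$. This is exactly where I expect the real difficulty. The correct route is: first establish $N' \in \mathfrak{X}_\pi$ using that $N'$ is Noetherian as a $G$-operator group inside $N$ — but that is what Lemma 5.9 itself is claiming. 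So the induction must instead be organized so that the \emph{abelian-by-(something already handled)} structure is exploited directly.

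The cleaner organization: induct on $m(N)$ (or on $h(N)$), not on derived length. Let $F = \mathrm{Fitt}(G)$, which is nilpotent (Proposition 3.1(i)), and note $N \cap F$ is a nilpotent normal subgroup of $G$, Noetherian as a $G$-operator group. One shows $N \cap F \in \mathfrak{X}_\pi$: indeed $N/(N\cap F) \hookrightarrow G/F$ is virtually polycyclic, hence if $N \in \mathfrak{X}_\pi$ then... again this needs that a normal subgroup of an $\mathfrak{X}_\pi$-group lies in $\mathfrak{X}_\pi$, which is false in general. The resolution must use Noetherianity crucially: a Noetherian $G$-operator group that is solvable minimax has only finitely many $G$-invariant subgroups up to "finiteness", and one argues that any $\pi$-minimax non-torsion quotient of such a normal subgroup $N \cap F$ would, by the Noetherian condition combined with Lemma 5.5 / Lemma 5.7-type propagation along the lower central $G$-series, produce a $\pi$-minimax non-torsion quotient of $N$ itself. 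So the plan is: (1) reduce via $\tau(N)$ finite (Lemma 5.2) and via passing to a finite-index abelian quotient $G/P$ with $P = $ an appropriate term so $P_{\mathrm{ab}}$ has finite torsion-free rank; (2) handle $N$ nilpotent by Lemma 5.8 taking $P$ to be a nilpotent normal subgroup of $G$ containing $N$ with $G/P$ abelian — such $P$ exists because $G/\mathrm{Fitt}(G)$ is virtually abelian, so after finite-index reduction $N \cdot \mathrm{Fitt}(G) =: P$ works, it is nilpotent as a product of nilpotent normals? no, that fails too unless they commute appropriately.

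\textbf{Main obstacle.} The genuine crux, which I would spend most effort on, is producing the ambient \emph{nilpotent} normal subgroup $P$ with $G/P$ abelian and $P_{\mathrm{ab}}$ of finite torsion-free rank, together with reducing the general (non-nilpotent) $N$ to the nilpotent case without illegitimately invoking subgroup-closure of $\mathfrak{X}_\pi$. The way through is to exploit that $\mathfrak{X}_\pi$ \emph{is} closed under extensions, quotients, and joins of normal subgroups (Lemma 5.1(ii),(iii)): one writes $M$ as built from $M \cap (\text{terms of a }G\text{-invariant nilpotent-by-abelian refinement of }N)$, applies Lemma 5.8 to the nilpotent pieces (which sit inside a common nilpotent $P = $ Fitting subgroup of a suitable finite-index subgroup, with $G/P$ abelian after a further finite-index reduction, legitimate by Proposition 3.1(ii) and Lemma 5.1(i)), and reassembles using the extension-closure of $\mathfrak{X}_\pi$; the Noetherian hypothesis on $N$ as a $G$-operator group guarantees the refinement has finite length and that each factor is Noetherian, so Lemma 5.8's hypothesis (iii) is met at every stage.
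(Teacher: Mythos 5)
Your proposal correctly identifies the central difficulty --- $\mathfrak{X}_{\pi}$ is not subgroup-closed, so you cannot pass from $N$ to $N'$ or to $N\cap{\rm Fitt}(G)$ --- but it never resolves it, and the final ``reassembly'' plan is circular: to apply the nilpotent-chain lemma (the one with hypotheses $M<N<P$) to the pieces of a $G$-invariant refinement of $N$, you must first know that each such piece of $N$ lies in $\mathfrak{X}_{\pi}$, and that is precisely the statement you are trying to prove. Extension-closure of $\mathfrak{X}_{\pi}$ lets you reassemble $M$ from pieces already known to be in $\mathfrak{X}_{\pi}$; it gives you no way to push membership \emph{down} into the subgroups of $N$ in the first place.

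The missing idea is that the hypotheses force $N$ to be virtually nilpotent, so no decomposition of $N$ is needed at all. Since $N$ is Noetherian as a $G$-operator group, $\tau(N)$ is finite (Lemma 5.2), so $N$ is a virtually torsion-free solvable minimax group and hence an extension of a nilpotent group by a polycyclic one. A polycyclic group is $\pi$-minimax for every $\pi$ (its spectrum is empty), so the polycyclic quotient of $N$ is a $\pi$-minimax quotient of an $\mathfrak{X}_{\pi}$-group and therefore torsion, hence finite; thus $N$ is virtually nilpotent. Replacing $N$ by a $G$-invariant nilpotent subgroup of finite index (harmless by Lemma 5.1(i),(ii), since $M$ is then a finite, hence torsion, extension of its intersection with that subgroup), one applies the nilpotent-chain lemma directly to $M<N<{\rm Fitt}(G)$: here $N\leq{\rm Fitt}(G)$ holds automatically because $N$ is now a nilpotent normal subgroup, $G/{\rm Fitt}(G)$ is virtually abelian so a further finite-index reduction makes it abelian, and ${\rm Fitt}(G)_{\rm ab}$ has finite torsion-free rank because $G$ is minimax. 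This also disposes of what you flagged as the ``main obstacle'': the ambient nilpotent normal subgroup $P$ is simply the Fitting subgroup of $G$, not $N\cdot{\rm Fitt}(G)$.
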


\begin{proof}  

By Proposition 1.2(iii), $N$ can be expressed as an extension of a nilpotent group by a polycyclic one. Since $N$ belongs to the class $\mathfrak{X}_{\pi}$, this means that $N$ must be virtually nilpotent. By passing to a $G$-invariant subgroup of finite index, we can assume that $N$ is nilpotent. In addition, $F={\rm Fitt}(G)$ is nilpotent, and $G/F$ is virtually abelian. The latter assertion implies that $G$ has a subgroup $G_0$ of finite index such that $F<G_0$ and $G_0/F$ is abelian. J. S. Wilson shows in [{\bf 13}, Theorem~A] that any normal subgroup of a group $\Gamma$ that is Noetherian as a $\Gamma$-operator group is also Noetherian with respect to any subgroup of finite index in $\Gamma$. Hence $N$ is Noetherian as a $G_0$-operator group. Therefore, the conclusion follows immediately by applying Lemma 4.7 to the chain 
$M<N<F$ inside the group~$G_0$. 

\end{proof}

Armed with the above lemma, we can proceed with the proof of Theorem C. 

\begin{proof}[{\bf Proof of Theorem C}] 

We argue by induction on the length of the derived series of $K$. First suppose that $K$ is abelian. Appealing to Lemma 4.8 and Theorem~A, we deduce that $H^2(G/K,K)$ is finite. Thus $G$ has a subgroup $Y$ such that $K\cap Y$ is finite and $[G:KY]<\infty$. The former property implies that $Y$ is residually finite. As a result, $Y$ has a subgroup $X$ of finite index with $X\cap K=1$. The subgroup $X$, then, can serve as the near complement that we seek.

Next assume that the derived length of $K$ is greater than one. By the abelian case, $G$ contains a subgroup $Y$ such that $Y\cap K=K'$ and $[G:KY]<\infty$. According to Lemma 4.8, $K'$ belongs to $\mathfrak{X}_{\pi}$. Hence we can apply the inductive hypothesis to $K'$ inside of $Y$ to obtain $X<Y$ such that $K'\cap X=1$ and $[Y:K'X]$ is finite. Then $X$ fulfills our requirements. 
\end{proof} 

In view of the observations made at the beginning of the second paragraph of this section, Theorem C has the following important special case.

\begin{corollary} Let $G$ be a solvable minimax group that satisfies the maximum condition on normal subgroups. Assume that $K$ is a normal $\mathfrak{X}_{\pi}$-subgroup of $G$ such that $G/K$ is $\pi$-minimax. Then $K$ has a near complement in $G$.
\end{corollary}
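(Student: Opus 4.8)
The plan is to deduce the corollary immediately from Theorem~D by checking that its hypotheses are satisfied. All of them are literally among the hypotheses of the corollary except two: that $K$ is Noetherian as a $G$-operator group, and that $G/K$ is virtually torsion-free. Both will be extracted from the maximal condition on normal subgroups of $G$.

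First, the $G$-invariant subgroups of $K$ --- with respect to the conjugation action of $G$ on $K$ --- are precisely the subgroups $H\le K$ with $gHg^{-1}=H$ for all $g\in G$; since $H\le G$, such an $H$ is exactly a normal subgroup of $G$ contained in $K$. Hence the maximal condition on normal subgroups of $G$ restricts to the maximal condition on $G$-invariant subgroups of $K$, so $K$ is Noetherian as a $G$-operator group. Next, $G/K$ is again a solvable minimax group, and it inherits the maximal condition on normal subgroups from $G$, since an ascending chain of normal subgroups of $G/K$ lifts to one of normal subgroups of $G$ containing $K$. It thus suffices to show that a solvable minimax group $H$ satisfying the maximal condition on normal subgroups is virtually torsion-free. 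The torsion radical $T=\tau(H)$ is a torsion minimax group, hence \v{C}ernikov; let $D$ be its divisible part. As $D$ is characteristic in $T$ and $T\lhd H$, we have $D\lhd H$; and for each prime $p$, the $p$-component $D_p$ is characteristic in $D$, and its subgroups $D_p[p^n]=\{x\in D_p:p^n x=0\}$ are characteristic in $D_p$, hence in $D$, hence normal in $H$. If $D\neq 1$, these would give a strictly ascending chain of normal subgroups of $H$, contrary to hypothesis; hence $D=1$ and $T$ is finite. It is a standard structural fact about solvable minimax groups that having finite torsion radical is equivalent to being virtually torsion-free, so $H$, and in particular $G/K$, is virtually torsion-free.

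With both additional hypotheses verified, Theorem~D applies to $G$ and $K$ and yields the required near complement. The only point above that is not a routine hypothesis chase is the equivalence ``finite torsion radical $\Longleftrightarrow$ virtually torsion-free'' for solvable minimax groups; I would invoke this as a known property --- the kind of fact recorded in Robinson's treatise on finiteness conditions --- rather than reprove it here.
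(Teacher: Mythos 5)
Your proposal is correct and follows exactly the route the paper intends: Corollary 5.9 is deduced from Theorem D via the observation (made in the second paragraph of \S\,5) that the maximal condition on normal subgroups of $G$ yields both the Noetherian hypothesis on $K$ and the virtual torsion-freeness of $G/K$. You simply supply the details the paper leaves implicit, and your appeal to the standard fact that a solvable minimax group with finite torsion radical is virtually torsion-free is legitimate and consistent with the paper's own usage.
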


In addition to the assumption that $G/K$ is $\pi$-minimax, Theorem C requires three hypotheses: (1) $G/K$ is virtually torsion-free; (2) $K$ is Noetherian; (3) $K$ belongs to $\mathfrak{X}_{\pi}$. It is very easy to demonstrate that each of  these three conditions is indispensable. To accomplish this, we present three examples in which no near complement is present; in each, one of the three hypotheses is violated, while the other two hold. 

\begin{example} {\rm Let $p$ be a prime and $\Gamma$ the group consisting of all matrices of the form
\begin{equation*} \begin{pmatrix}
1&*&*\\ 0&\dagger&*\\ 0&0&1
\end{pmatrix},\end{equation*} 
where the entries $*$ above the diagonal are chosen from the ring $\mathbb Z[1/p]$ and the diagonal entry $\dagger$ is an integer power of $p$. Let $A$ be the central subgroup generated by
\begin{equation*} \begin{pmatrix}
1&0&1\\ 0&1&0\\ 0&0&1
\end{pmatrix}.\end{equation*} 
Let $G=\Gamma /A$ and $K$ be the central subgroup of $G$ generated by the image of 
\begin{equation*}\begin{pmatrix}
1&0&p^{-1}\\ 0&1&0\\ 0&0&1
\end{pmatrix}.\end{equation*} 
The group $G$ is a finitely generated solvable minimax group and $K\cong C_p$. 
If $\pi=\{p\}$, then $G/K$ is $\pi$-minimax, yet $K$ lacks a near complement. Notice that $K$ satisfies conditions (2) and (3), but $G/K$ fails to fulfill (1).
} 

\end{example}

\begin{example} {\rm Let $p$ be a prime and $G$ the nilpotent minimax group consisting of all matrices of the form
\begin{equation*} \begin{pmatrix}
1&*&\dagger\\ 0&1&*\\ 0&0&1
\end{pmatrix},\end{equation*} 
where the entries $*$  are integers and the entry $\dagger$ is from the ring $\mathbb Z[1/p]$.  Let $K=Z(G)$; that is, $K$ consists of all the matrices in $G$ whose $*$ entries are $0$.  For $\pi=\emptyset$, $G/K$ is $\pi$-minimax, and the extension $1\rightarrow K\rightarrow G\rightarrow G/K\rightarrow 1$ satisfies conditions (1) and (3), but not (2). In addition, there is no near complement to $K$ in $G$. }
\end{example}

\begin{remark} {\rm The group $G$ in Example 4.11 is not finitely generated. It would be interesting to discover whether there is a finitely generated example with the same characteristics.}
\end{remark}

\begin{example} {\rm Let $G$ be the group of 3x3 upper unitriangular matrices with integer entries. Set $K=Z(G)$. For $\pi=\emptyset$, $G/K$ is $\pi$-minimax; also, the extension $1\rightarrow K\rightarrow G\rightarrow G/K\rightarrow 1$ satisfies conditions (1) and (2), but not (3). Finally, we observe that $K$ does not have a near complement.}
\end{example}

One particular normal subgroup $K$ of a solvable group $G$ with finite rank that often satisfies the hypotheses of Theorem C is the subgroup $\rho_{\pi}(G)$. In order to show this, we require the following observation. 

\begin{proposition} Let $G$ be a virtually torsion-free solvable group of finite rank. If every subnormal abelian subgroup of $G$ is $\pi$-minimax, then $G$ must be $\pi$-minimax.
\end{proposition}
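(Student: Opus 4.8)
The plan is to argue by induction on the derived length $d$ of $G$. When $d\le 1$ the group $G$ is abelian, hence a subnormal abelian subgroup of itself, and so $\pi$-minimax. For $d\ge 2$ put $A=G^{(d-1)}$: this is an abelian characteristic subgroup, hence a subnormal abelian subgroup, so $A$ is $\pi$-minimax by hypothesis, and $\tau(A)\le\tau(G)$ is finite. Since $\pi$-minimax groups form an extension-closed class it suffices to prove $G/A$ is $\pi$-minimax, and one would like to invoke the inductive hypothesis on $G/A$, which has derived length $d-1$. A subnormal abelian subgroup of $G/A$ has preimage $B$ that is subnormal in $G$, contains $A$, and satisfies $B'\le A$; thus $B$ is metabelian, $\tau(B)\le\tau(G)$ is finite, and its subnormal abelian subgroups are subnormal abelian in $G$, hence $\pi$-minimax -- so when $d\ge 3$ the inductive hypothesis (applied at derived length $\le 2<d$) shows $B$, and therefore $B/A$, is $\pi$-minimax. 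The argument thus reduces to the base case $d=2$, i.e.\ to $G$ metabelian; one must also check that $\tau(G/A)$ stays controlled under this reduction, a point I return to at the end.

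So suppose $G$ is metabelian, put $A=G'$ (which is $\pi$-minimax, with $\tau(A)\le\tau(G)$ finite) and $Q=G/A$ (abelian), and let $C=C_G(A)$. As $A$ is abelian and centralized by $C$ while $C/A\le Q$ is abelian, $C$ is nilpotent of class at most $2$, with $A\le Z(C)$, and $C$ is normal in $G$. In a nilpotent group every subgroup is subnormal, so every abelian subgroup of $C$ is subnormal abelian in $G$ and hence $\pi$-minimax; in particular $Z(C)$ is $\pi$-minimax. The commutator induces an \emph{alternating} $\mathbb Z$-bilinear pairing $C/Z(C)\times C/Z(C)\to C'$, with $C'\le A$ $\pi$-minimax. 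I claim $C/Z(C)$ is $\pi$-minimax; otherwise the abelian group $C/Z(C)$ would possess a subquotient impossible in a $\pi$-minimax abelian group -- a free abelian group of infinite rank, an elementary abelian $p$-group of infinite rank, or a copy of $C_{q^\infty}$ with $q\notin\pi$ -- which lifts to a subgroup $D\le C$. Because the pairing is alternating and its target $C'$ has finite rank (for a $C_{q^\infty}$-subquotient the pairing is identically zero, as $C_{q^\infty}\otimes_{\mathbb Z}C_{q^\infty}=0$), a greedy construction yields inside $D$ a subgroup as large as $D$ on which the pairing vanishes; its lift in $C$ is then an abelian subgroup of infinite rank, or one having a $C_{q^\infty}$-section with $q\notin\pi$ -- contradicting that abelian subgroups of $C$ are $\pi$-minimax. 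Hence $C/Z(C)$, and so $C$, is $\pi$-minimax.

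It remains to show $G/C$ is $\pi$-minimax, as $G$ is then an extension of the $\pi$-minimax group $C$ by the $\pi$-minimax group $G/C$. Now $G/C$ is abelian (a quotient of $Q$) and, since $C=C_G(A)$, embeds in ${\rm Aut}(A)$. Every automorphism of $A$ preserves the finite subgroup $\tau(A)$, so ${\rm Aut}(A)\to{\rm Aut}(\tau(A))\times{\rm Aut}(A/\tau(A))$ has kernel ${\rm Hom}(A/\tau(A),\tau(A))$, finite because $A/\tau(A)$ has a finitely generated subgroup with divisible quotient and $\tau(A)$ is finite, and the image of $G/C$ in the finite group ${\rm Aut}(\tau(A))$ is finite; so it is enough that every abelian subgroup of ${\rm Aut}(V)$ be $\pi$-minimax when $V$ is torsion-free $\pi$-minimax abelian. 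Choosing a finitely generated $V_0\le V$ of full rank with $V/V_0$ divisible torsion and letting $m$ be the product of the (finitely many) primes of $V/V_0$, one has $V_0\subseteq V\subseteq V_0[1/m]$ and ${\rm spec}(V)\subseteq\{\text{primes of }m\}\subseteq\pi$; an automorphism of $V$, extended to $V\otimes\mathbb Q$, carries $V_0$ and its inverse carries $V_0$ into $V_0[1/m]$, so in a basis of $V_0$ it lies in $GL_r(\mathbb Z[1/m])$ ($r=h(V)$), whence ${\rm Aut}(V)\hookrightarrow GL_r(\mathbb Z[1/m])$. An abelian subgroup of $GL_r(\mathbb Z[1/m])$ is, by triangularization, an extension of a unipotent part -- a submodule of a free $\mathbb Z[1/m]$-module, hence $\{\text{primes of }m\}$-minimax -- by a diagonalizable part whose entries are $S$-units of a number ring, hence finitely generated; so it is $\{\text{primes of }m\}$-minimax, thus $\pi$-minimax. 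Therefore $G/C$, and with it $G$, is $\pi$-minimax, completing the metabelian case.

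Finally, the deferred point ($d\ge 3$): that $\tau(G/A)$ remains controlled. Its preimage $T$ is normal in $G$, so $\tau(T)\le\tau(G)$ is finite, while $T/A$ is torsion; every subnormal abelian subgroup of $T/A$ is $\pi$-minimax (again via the inductive/metabelian cases applied to preimages), and a torsion solvable group with all subnormal abelian subgroups $\pi$-minimax is \v{C}ernikov -- a companion fact provable by exactly the same method (induction on derived length together with the alternating-commutator argument in the metabelian step, now with \v{C}ernikov rather than $\pi$-minimax as the target class). Thus $\tau(G/A)$ is \v{C}ernikov, and so the induction should be carried out with the a priori more general hypothesis that $\tau(G)$ is \v{C}ernikov, the stated result being the case where $\tau(G)$ is finite; in the metabelian step this forces only the extra observation that the ``unipotent'' part of the action of $G$ on $A$ is carried by a nilpotent -- hence, by the pairing argument again, $\pi$-minimax -- subgroup of $G$. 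The main obstacle is the metabelian case, and within it the two module-theoretic inputs isolated above; the surrounding torsion bookkeeping, while delicate, reduces to Minkowski's finiteness theorem for torsion subgroups of $GL_n(\mathbb Q)$ and to the companion fact about torsion solvable groups.
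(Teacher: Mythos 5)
Your metabelian core (split off $C=C_G(G')$, which is nilpotent of class at most $2$ and is handled by the commutator-pairing argument, then control the abelian quotient $G/C$ inside ${\rm Aut}(G')\hookrightarrow {\rm Aut}(\tau(G'))\times GL_r(\mathbb Z[1/m])$ using the finiteness of $\tau(G')$) is essentially the argument the paper has in mind: the paper offers no proof of its own but refers to Robinson's proof of Baer's theorem in [{\bf 11}, \S 6.1], and your second and third paragraphs reconstruct the finite-torsion-radical case of that proof. Two local slips there are repairable but should be flagged: your trichotomy of obstructions to $\pi$-minimaxity omits, e.g., a subgroup of the form $\bigoplus_{p\in S}C_p$ with $S$ an infinite set of primes (the pairing still vanishes on such a subgroup, so the method survives, but the case analysis as written is incomplete), and you slide between ``subquotient'' and ``subgroup'' at the point where you lift an isotropic piece back into $C$.

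The genuine gap is in the reduction to the metabelian case. Your induction on derived length requires applying the hypothesis to $G/A$ with $A=G^{(d-1)}$, and, as you concede, $\tau(G/A)$ need not be finite. Your proposed repair --- rerun the induction with the weaker hypothesis ``$\tau(G)$ is \v{C}ernikov'' --- does not work, because the proposition becomes \emph{false} under that hypothesis. Take an odd prime $p$, let $A=C_{p^\infty}$, and choose multiplicatively independent units $u_1,u_2,\dots\in 1+p\hat{\mathbb Z}_p\leq{\rm Aut}(A)$, so that $F=\langle u_1,u_2,\dots\rangle$ is free abelian of infinite rank acting faithfully on $A$. Put $G=A\rtimes F$. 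Then $\tau(G)=A$ is \v{C}ernikov, and every subnormal abelian subgroup of $G$ lies in $A$: if $H$ is subnormal of defect $d$ and $H\not\leq A$, then $[A,h]=(u-1)A=A$ for any $h\in H$ acting by a unit $u\neq 1$ (since $A$ is a divisible $\hat{\mathbb Z}_p$-module), whence $A=[A,{}_dH]\leq H$, forcing the abelian $H$ into $C_G(A)=A$, a contradiction. So all subnormal abelian subgroups are $\{p\}$-minimax, yet $G$ has infinite torsion-free rank. This example also shows exactly where the hypothesis ``$\tau(G)$ finite'' does real work: it forces $\tau$ of every abelian normal section to be finite, so that the relevant automorphism groups are (finite)-by-$GL_r(\mathbb Z[1/m])$ rather than containing copies of $1+p\hat{\mathbb Z}_p$. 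Consequently the passage from the metabelian case to general derived length cannot be done by naively quotienting out $G^{(d-1)}$; one must organize the induction as Robinson does (working inside the Fitting subgroup, where all abelian subgroups are automatically subnormal, and controlling the quotient by its action on torsion-free abelian sections), and your write-up does not supply a correct replacement for this step.
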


\begin{proof} We will invoke R. Baer's result [{\bf 1}, Hilfssatz 7.12] that a metabelian group with finite torsion radical is $\pi$-minimax if every normal abelian subgroup is $\pi$-minimax. To deduce the proposition from Baer's assertion, we argue by induction on $h(G)$, the case $h(G)=0$ being trivial. Assume that $h(G)>0$. By Proposition~1.2(ii), $G$ has an infinite abelian characteristic subgroup $A$ such that $G/A$ is virtually torsion-free. Let $B$ be a subnormal subgroup of $G$ such that $A< B$ and $B/A$ is abelian. It follows from Baer's result that $B$ is $\pi$-minimax. Therefore, by the inductive hypothesis, $G/A$ is $\pi$-minimax. Thus $G$ is $\pi$-minimax. 
\end{proof}

Proposition 4.13 allows us to establish the following lemma and its corollary. 

\begin{lemma}
Let $G$ be a solvable group with finite torsion-free rank. If $G$ has no nontrivial subnormal $\mathfrak{X}_{\pi}$-subgroups, then $G$ is $\pi$-minimax.
\end{lemma}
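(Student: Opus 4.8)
The plan is to deduce the lemma from Baer's criterion in the form recorded in Proposition 5.13, which reduces the problem to controlling the subnormal abelian subgroups of $G$. Since that proposition requires the torsion radical to be finite, I would first observe that $\tau(G)$ is a normal --- hence subnormal --- torsion subgroup of $G$; every torsion group lies in $\mathfrak{X}_{\pi}$, so the hypothesis on $G$ forces $\tau(G)=1$, and in particular $\tau(G)$ is finite. By Proposition 5.13 it therefore suffices to show that every subnormal abelian subgroup $A$ of $G$ is $\pi$-minimax.

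The first reduction is that such an $A$ must be torsion-free: the torsion subgroup $\tau(A)$ is characteristic in $A$, and a characteristic subgroup of a subnormal subgroup is again subnormal, so $\tau(A)$ is a subnormal torsion --- hence $\mathfrak{X}_{\pi}$ --- subgroup of $G$, whence $\tau(A)=1$. Now suppose, for a contradiction, that some subnormal abelian subgroup $A$ of $G$ is torsion-free but not $\pi$-minimax. Then ${\rm spec}(A)\not\subseteq\pi$, so I may fix a prime $q\in{\rm spec}(A)\setminus\pi$, and I would pass to the characteristic subgroup
$$C=\bigcap_{n\geq 1}q^{n}A,$$
which is the maximal $q$-divisible subgroup of $A$ (torsion-freeness of $A$ is used here to verify that $C$ is genuinely $q$-divisible). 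Because $q\in{\rm spec}(A)$, a standard property of abelian minimax groups guarantees that $C\neq 1$.

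The heart of the argument is the claim that $C$ belongs to $\mathfrak{X}_{\pi}$. To see this, let $\bar C$ be any $\pi$-minimax quotient of $C$. Since quotients of $q$-divisible groups are $q$-divisible, both $\bar C$ and $\bar C/\tau(\bar C)$ are $q$-divisible. If $\bar C$ were not torsion, then $\bar C/\tau(\bar C)$ would be a nonzero torsion-free $q$-divisible abelian group of finite rank, and hence would contain a subgroup isomorphic to $\mathbb Z[1/q]$; as the class of $\pi$-minimax groups is subgroup-closed and $\mathbb Z[1/q]$ is not $\pi$-minimax (because $q\notin\pi$), this contradicts the fact that $\bar C$ is $\pi$-minimax. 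Hence every $\pi$-minimax quotient of $C$ is torsion, i.e. $C\in\mathfrak{X}_{\pi}$. Then $C$, being characteristic in the subnormal subgroup $A$, is a nontrivial subnormal $\mathfrak{X}_{\pi}$-subgroup of $G$, contradicting the hypothesis. Therefore every subnormal abelian subgroup of $G$ is $\pi$-minimax, and Proposition 5.13 yields that $G$ is $\pi$-minimax.

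The only place requiring real care is the appeal to the structure theory of abelian minimax groups to conclude that $\bigcap_{n\geq 1}q^{n}A\neq 1$ whenever $A$ is a torsion-free abelian minimax group with $q\in{\rm spec}(A)$ --- equivalently, that $A$ possesses a nontrivial $q$-divisible subgroup. This is standard, and I would supply a precise reference to Robinson's treatment of minimax groups for it; everything else used above (that a nonzero torsion-free $q$-divisible group of finite rank contains a copy of $\mathbb Z[1/q]$, and the elementary closure properties of $\pi$-minimax groups and of $\mathfrak{X}_{\pi}$) is immediate.
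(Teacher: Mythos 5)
Your overall strategy coincides with the paper's: use the hypothesis to kill $\tau(G)$, invoke Proposition 5.13, and then extract from a subnormal abelian subgroup $A$ that is not $\pi$-minimax a nontrivial subnormal $\mathfrak{X}_{\pi}$-subgroup. The gap is in the extraction step. The ``standard property'' you invoke --- that a torsion-free abelian minimax group $A$ with $q\in{\rm spec}(A)$ satisfies $\bigcap_{n\geq 1}q^{n}A\neq 1$ --- is true in rank one but false in general. For a counterexample, fix a prime $q$ and $\lambda\in\hat{\mathbb Z}_q\setminus\mathbb Q$, and set
$$A=\left\{(a,b)\in\mathbb Z[1/q]^{2}\ :\ a\lambda-b\in\hat{\mathbb Z}_q\right\}.$$
Then $\mathbb Z^{2}\subset A$, and $(a,b)\mapsto a+\mathbb Z$ induces an isomorphism from $A/\mathbb Z^{2}$ onto $\mathbb Z[1/q]/\mathbb Z\cong \mathbb Z/q^{\infty}$, so $A$ is a torsion-free abelian minimax group with ${\rm spec}(A)=\{q\}$. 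However, $(a,b)\in q^{n}A$ forces $a\lambda-b\in q^{n}\hat{\mathbb Z}_q$, so any element of $\bigcap_{n}q^{n}A$ satisfies $a\lambda=b$ with $a,b$ rational and $\lambda$ irrational, whence $a=b=0$. Thus your $C$ is trivial and the contradiction never materializes. (This is exactly the failure of minimax abelian groups to decompose into rank-one pieces; in this example $A$ itself already lies in $\mathfrak{X}_{\emptyset}$, so the lemma is not threatened --- only your construction is.)

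The repair is cheap and is what the paper does. Since $A$ is abelian and subnormal, \emph{every} subgroup of $A$ is subnormal in $G$, so there is no need to restrict to characteristic subgroups: choose $B\leq A$ of smallest possible Hirsch length subject to $A/B$ being $\pi$-minimax. Then $B\neq 1$ because $A$ is not $\pi$-minimax, and $B\in\mathfrak{X}_{\pi}$, since a nontorsion $\pi$-minimax quotient $B/C$ would make $A/C$ $\pi$-minimax (an extension of $B/C$ by $A/B$) with $h(C)<h(B)$, contradicting minimality. The remainder of your argument is sound: your direct observation that $\tau(G)$ is a normal torsion, hence $\mathfrak{X}_{\pi}$-, subgroup correctly disposes of the preliminary case that the paper handles via residual finiteness, and your reduction of subnormal abelian subgroups to torsion-free ones is fine (though, with the repair above, no longer needed).
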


\begin{proof}
Suppose that $G$ is not $\pi$-minimax. We will show that $G$ must contain a nontrivial subnormal $\mathfrak{X}_{\pi}$-subgroup, proving the lemma. If $\tau(G)\neq 1$, then it can serve as the desired subgroup. Assume that $\tau(G)=1$. 
By Proposition 4.13, $G$ must have a subnormal abelian subgroup $A$ that is not $\pi$-minimax. Let $B$ be a subgroup of $A$ with the smallest possible Hirsch length such that $A/B$ is $\pi$-minimax. Then $B$ is a nontrivial group in $\mathfrak{X}_{\pi}$, and $B$ is subnormal in $G$.
\end{proof}

\begin{corollary} If $G$ is a solvable group with finite torsion-free rank, then $G/\rho_{\pi}(G)$ is a virtually torsion-free $\pi$-minimax group.
\end{corollary}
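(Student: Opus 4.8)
The plan is to feed a suitable quotient of $G$ into Lemma 5.14, after repairing a mismatch between that lemma's hypothesis and what the definition of $\rho_{\pi}$ provides automatically. Write $N=\rho_{\pi}(G)$ and $\bar G=G/N$. Since $\mathfrak{X}_{\pi}$ is a radical class, Lemma 5.1(iii) gives $N\in\mathfrak{X}_{\pi}$, and the standard radical-class argument yields $\rho_{\pi}(\bar G)=1$: if $\bar M\lhd\bar G$ lies in $\mathfrak{X}_{\pi}$, then its preimage in $G$ is normal and is an extension of $N$ by $\bar M$, hence lies in $\mathfrak{X}_{\pi}$ by Lemma 5.1(ii), hence is contained in $N$. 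So $\bar G$ has no nontrivial \emph{normal} $\mathfrak{X}_{\pi}$-subgroup. Lemma 5.14, however, demands the absence of nontrivial \emph{subnormal} $\mathfrak{X}_{\pi}$-subgroups, and because $\mathfrak{X}_{\pi}$ is not subgroup-closed this is genuinely stronger. Bridging this gap is the main work.

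To close the gap I would prove, for an arbitrary group $G$, that every subnormal $\mathfrak{X}_{\pi}$-subgroup is contained in $\rho_{\pi}(G)$, arguing by induction on the subnormal defect $n$ of the subgroup $H$. The case $n\le 1$ is immediate. For $n\ge 2$ choose a normal subgroup $L\lhd G$ in which $H$ is subnormal of defect $n-1$; the inductive hypothesis gives $H^{L}\in\mathfrak{X}_{\pi}$, with $H^{L}\lhd L$. Since $L\lhd G$, each $G$-conjugate $(H^{L})^{g}$ equals $(H^{g})^{L}$, so it is again normal in $L$, lies in $\mathfrak{X}_{\pi}$ (being a conjugate of $H^{L}$), and is contained in $L$; moreover their join is $H^{G}$. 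Thus $H^{G}$ is a join of normal $\mathfrak{X}_{\pi}$-subgroups of $L$, whence $H^{G}\in\mathfrak{X}_{\pi}$ by Lemma 5.1(iii), and since $H^{G}\lhd G$ we conclude $H\le H^{G}\le\rho_{\pi}(G)$. (This is really the general fact that the radical of a radical class contains every subnormal member of the class; it follows purely from closure under quotients, extensions, and joins of normal subgroups.)

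With this in hand, $\bar G$ has no nontrivial subnormal $\mathfrak{X}_{\pi}$-subgroup, so Lemma 5.14 shows $\bar G$ is $\pi$-minimax. It only remains to see that $\bar G$ is virtually torsion-free: being solvable minimax, $\bar G$ has a torsion radical $\tau(\bar G)$, which is a normal torsion---hence $\mathfrak{X}_{\pi}$---subgroup of $\bar G$ and is therefore contained in $\rho_{\pi}(\bar G)=1$. So $\bar G$ is in fact torsion-free, which is more than enough. The only delicate point in the whole argument is the normal-closure induction of the second paragraph; everything else is bookkeeping with the closure properties already recorded in Lemma 5.1.
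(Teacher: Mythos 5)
Your overall route is the same as the paper's: show $\rho_{\pi}(G/\rho_{\pi}(G))=1$, upgrade ``no nontrivial normal $\mathfrak{X}_{\pi}$-subgroup'' to ``no nontrivial subnormal $\mathfrak{X}_{\pi}$-subgroup,'' apply Lemma 5.14, and then deal with the torsion radical. The paper simply cites the general theory of radical classes ([{\bf 12}, \S 1.3]) for the subnormal step; your induction on defect fills in exactly that fact, and it works, though note that what your inductive step actually proves is the statement ``$H^G\in\mathfrak{X}_{\pi}$ for $H$ subnormal of defect $\le n$,'' not the containment statement you announce at the outset --- you should phrase the induction that way, since $H\le\rho_{\pi}(L)$ alone does not immediately hand you $H^L\in\mathfrak{X}_{\pi}$.

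The one genuine error is the last step. From $\tau(\bar G)\le\rho_{\pi}(\bar G)=1$ you conclude that $\bar G$ is torsion-free; that inference is false. The torsion radical is the join of the torsion \emph{normal} subgroups, and a group can have torsion elements while having no nontrivial torsion normal subgroup: the infinite dihedral group is a solvable minimax group with $\tau(D_\infty)=1$ (and indeed $\rho_{\pi}(D_\infty)=1$, since every nontrivial normal subgroup of $D_\infty$ has $C_\infty$ as a quotient) which is not torsion-free. What you actually need, and what the paper uses, is the standard fact that a solvable minimax group with trivial (indeed, finite) torsion radical is virtually torsion-free --- equivalently, such a group is residually finite, its finite residual being a radicable torsion group contained in $\tau$. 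So replace ``$\bar G$ is in fact torsion-free'' by an appeal to that fact; the corollary then follows, but as stated your final sentence proves a false strengthening by an invalid step.
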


\begin{proof} The $\mathfrak{X}_{\pi}$-radical of $G/\rho_{\pi}(G)$ is trivial. Hence  $G/\rho_{\pi}(G)$ has no nontrivial subnormal $\mathfrak{X}_{\pi}$-subgroups. Therefore, it is $\pi$-minimax by Lemma 4.14. In addition, its torsion radical is trivial, making it virtually torsion-free.
\end{proof}

In light of Corollary 4.15, we can state the following corollary to Theorem C.

\begin{corollary} Let $G$ be a solvable group of finite rank such that $\rho_{\pi}(G)$ is Noetherian as a $G$-operator group. Then $\rho_{\pi}(G)$ has a near complement in $G$.
\end{corollary}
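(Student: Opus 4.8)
The plan is to derive this corollary directly from Theorem D by taking $K=\rho_{\pi}(G)$, so the entire task amounts to checking that this particular $K$ satisfies every hypothesis of Theorem D. First I would note that $\rho_{\pi}(G)$ is a characteristic, hence normal, subgroup of $G$, and that it belongs to the class $\mathfrak{X}_{\pi}$. The latter is a consequence of the radical-class properties recorded in Lemma~5.1: since $\mathfrak{X}_{\pi}$ is closed under quotients and extensions (Lemma~5.1(ii)) and the join of a family of normal $\mathfrak{X}_{\pi}$-subgroups is again an $\mathfrak{X}_{\pi}$-subgroup (Lemma~5.1(iii)), the join $\rho_{\pi}(G)$ of all normal $\mathfrak{X}_{\pi}$-subgroups of $G$ is itself an $\mathfrak{X}_{\pi}$-group. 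Thus the requirement $K\in\mathfrak{X}_{\pi}$ holds automatically.

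Next, the hypothesis that $\rho_{\pi}(G)$ is Noetherian as a $G$-operator group is precisely what is assumed in the statement of the corollary, so nothing further is needed for that condition. The remaining hypothesis of Theorem~D, namely that $G/\rho_{\pi}(G)$ is $\pi$-minimax and virtually torsion-free, is supplied verbatim by Corollary~5.15. Having verified all three conditions, I would simply invoke Theorem~D to conclude that $\rho_{\pi}(G)$ has a near complement in $G$.

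Since every step of this argument is a direct appeal to an already-established result, there is no genuine obstacle to overcome; the proof is essentially a bookkeeping exercise. The only point that requires even a moment's reflection is the observation that $\rho_{\pi}(G)$ itself lies in $\mathfrak{X}_{\pi}$, and this is immediate from the fact, noted after Lemma~5.1, that $\mathfrak{X}_{\pi}$ is a radical class. Accordingly, I expect the write-up to occupy only a few lines.
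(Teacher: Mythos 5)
Your proposal is correct and is exactly the argument the paper intends: the corollary is stated as an immediate consequence of Theorem~D once Corollary~5.15 supplies the hypothesis on $G/\rho_{\pi}(G)$, with $\rho_{\pi}(G)\in\mathfrak{X}_{\pi}$ following from Lemma~5.1(iii). The paper leaves this routine verification implicit, and your write-up fills it in faithfully.
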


In passing, we remark that, if the hypothesis that $\rho_{\pi}(G)$ is Noetherian as a $G$-operator group is dropped from Corollary 4.16, we can still find a near supplement to  $\rho_{\pi}(G)$. This is a corollary to Theorem B. 

\begin{corollary} Let $G$ be a solvable group of finite rank. Then $\rho_{\pi}(G)$ has a $\pi$-minimax near supplement in $G$.
\end{corollary}

We conclude this section by observing that Corollary 4.16, and {\it a fortiori} Theorem C, cannot be strengthened to deliver a full complement to $K$. 

\begin{example}{\rm  Assume that $p$ is an odd prime. Let $Q$ be a free abelian group of rank two.
Put $A=\mathbb Z[1/p]$ and define an action of $Q$ on $A$ by having both generators act via multiplication by $p$. This makes $A$ into a Noetherian $\mathbb Z Q$-module. We have $H^2(Q,A)\cong H_0(Q,A)$ by Poincar\'e
duality. Hence, since $p\neq 2$, $H^2(Q,A)\neq 0$.
Therefore, there is at least one nonsplit extension of $A$ by $Q$.
In this extension, $A$ is the $\mathfrak{X}_\emptyset$-radical and fails to have a full complement.}
\end{example}

\begin{acknowledgement}{\rm The authors are greatly indebted to Derek Robinson for providing the argument for Proposition~A that appears in this version of the article. Our original proof of the proposition was considerably longer and yielded a result that was not as robust as the current form.  Moreover, the improvement to Proposition A allowed us to strengthen Theorem A in two respects: in our original formulation the conclusion was merely that the cohomology groups were torsion; in addition, the group $G$ had to be minimax.  Hence, not only did Derek Robinson's contribution make the paper more succinct and perspicuous, but it also served to enhance the main result.} 
\end{acknowledgement}

\end{document}